\documentclass[11pt]{article}
\usepackage{amsfonts}
\usepackage{latexsym}
\usepackage{amsmath}
\usepackage{amssymb}
\usepackage{amsthm}
\usepackage{amsmath}
\usepackage{verbatim}
\usepackage{hyperref}
\usepackage{enumerate}
\usepackage{bibspacing}

\newtheorem{thm}{Theorem}[section]
\newtheorem*{thm*}{Theorem}

\newtheorem{corollary}[thm]{Corollary}

\newtheorem{lemma}[thm]{Lemma}
\newtheorem{prop}[thm]{Proposition}
\newtheorem*{prop*}{Proposition}
\newtheorem{proposition}[thm]{Proposition}

\newtheorem*{conj*}{Conjecture}
\newtheorem{defn}[thm]{Definition}
\newtheorem*{dfn*}{Definition}
\theoremstyle{definition}
\newtheorem{rem}[thm]{\textbf{Remark}}
\newtheorem*{rmk*}{Remark}
\newtheorem*{fact*}{Fact}

\theoremstyle{proof}

\DeclareMathOperator{\inj}{\textnormal{inj}}

\DeclareMathOperator{\interior}{\textnormal{int}}

\DeclareMathOperator{\Isom}{\textnormal{Isom}}
\DeclareMathOperator{\Stab}{\textnormal{Stab}}

\DeclareMathOperator{\supp}{\textnormal{spt}}

\newcommand{\Y}{\mathbf{Y}}
\newcommand{\T}{\mathbf{T}}

\newcommand{\norm}[1]{\left\Vert#1\right\Vert}
\newcommand{\snorm}[1]{\Vert#1\Vert}
\newcommand{\abs}[1]{\left\vert#1\right\vert}
\newcommand{\sabs}[1]{\vert#1\vert}

\newcommand{\R}{\mathbb{R}}
\newcommand{\Z}{\mathbb{Z}}

\renewcommand{\S}{\mathbb{S}}

\renewcommand{\H}{\mathcal{H}}
\newcommand{\I}{\mathcal{I}}

\newcommand{\M}{\mathbf{M}}

\newcommand{\eps}{\epsilon}

\newcommand{\Id}{\textrm{Id}}

\numberwithin{equation}{section}

\numberwithin{equation}{section}

\setlength{\topmargin}{-0.5cm}
\setlength{\textwidth}{5.8in}  \setlength{\textheight}{8.2in} \setlength{\oddsidemargin}{0.3in}  \setlength{\evensidemargin}{0.3in}

\begin{document}

\renewcommand*{\thefootnote}{\fnsymbol{footnote}}

\author{Emanuel Milman\textsuperscript{$*$} and Joe Neeman\textsuperscript{$\dagger$}}
\footnotetext{$^*$Department of Mathematics, Technion-Israel Institute of Technology, Haifa 32000, Israel. Email: emilman@tx.technion.ac.il.}
\footnotetext{$^\dagger$Email: joe.neeman@gmail.com.}

\begingroup    \renewcommand{\thefootnote}{}    \footnotetext{2020 Mathematics Subject Classification: 49Q20, 49Q10.}
    \footnotetext{Keywords: Minimizing partition, isoperimetric cluster, connected boundary, homogeneous Riemannian manifold.}
    \footnotetext{The research leading to these results is part of a project that has received funding from the European Research Council (ERC) under the European Union's Horizon 2020 research and innovation programme (grant agreement No 101001677).}
\endgroup

\title{On the connectedness of a minimizing cluster's boundary}

\date{\nonumber} 
\maketitle

\begin{abstract}
We verify that an isoperimetric minimizing cluster on a simply connected homogeneous Riemannian manifold with at most one end always has connected boundary.  In particular, the boundary of a single-bubble isoperimetric minimizer on such manifolds must be connected, and hence all isoperimetric sets and their complements must be connected. This is demonstrably false without the simple connectedness assumption or the restriction on the number of ends.
\end{abstract}

\section{Introduction}

Let $(M^n,g)$ denote an $n$-dimensional smooth Riemannian manifold, let $V$ denote the Riemannian volume measure, and let $\Isom(M)$ denote the group of isometries of $M$. Recall that $(M,g)$ is called a homogeneous Riemannian manifold if $\Isom(M)$ acts transitively on $M$, and that $(M,g)$ is necessarily complete in that case. 

Assume that $(M^n,g)$ is in addition complete and connected. 
The metric $g$ induces a geodesic distance $d$ on $M$, and the corresponding $k$-dimensional Hausdorff measure is denoted by $\H^k$. The perimeter (or surface area) of a Borel subset $U \subset M$ of locally finite perimeter is defined as $A (U) : = \H^{n-1} (\partial^* U)$, where $\partial^* U \subseteq \partial U$ is the reduced (measure-theoretic) boundary of $U$ \cite{MaggiBook}. Note that modifying $U$ on a null-set does not alter $\partial^* U$ and hence $A(U)$. 

A $q$-partition $\Omega = (\Omega_1, \ldots, \Omega_q)$ of $(M,g)$ is a $q$-tuple of Borel subsets $\Omega_i \subset M$ having locally finite perimeter, such that $\{\Omega_i\}$ are pairwise disjoint and $V(M \setminus \cup_{i=1}^q \Omega_i) = 0$. Note that the sets $\Omega_i$, called cells, are not required to be connected or non-empty. A $k$-tuple of pairwise disjoint cells $(\Omega_1,\ldots,\Omega_{k})$ so that $V(\Omega_i), A(\Omega_i) < \infty$ for all $i=1,\ldots,k$ is called a $k$-cluster, and its cells are called bubbles. Every $k$-cluster induces a partition by simply adding the ``exterior cell'' $\Omega_{k+1} := M \setminus \cup_{i=1}^{k} \Omega_i$; by abuse of notation, we will call the resulting $(k+1)$-partition  $\Omega = (\Omega_1,\ldots,\Omega_{k+1})$ a $k$-cluster (or $k$-bubble) as well. 
Of course, when $V(M) = \infty$ then necessarily $V(\Omega_{k+1}) = \infty$.

The volumes vector $V(\Omega)$ and total perimeter (or surface area) $A(\Omega)$ of a $q$-partition $\Omega$ are defined as
\[
	V (\Omega) := ( V(\Omega_1), \ldots, V(\Omega_q) )  ~,~ 
	A (\Omega) := \frac{1}{2} \sum_{i=1}^q A(\Omega_i) = \sum_{1\leq i <j \leq  q} \H^{n-1} (\Sigma_{ij}),
\]
where $\Sigma_{ij}:= \partial^* \Omega_i \cap \partial^* \Omega_j$ denotes the $(n-1)$-dimensional  interface between cells $\Omega_i$ and $\Omega_j$. 
The isoperimetric problem for $k$-clusters on $(M,g)$ consists of identifying those clusters $\Omega$ of prescribed volume $V(\Omega) = v$ which minimize the total perimeter $A(\Omega)$ -- such clusters are called isoperimetric minimizers. By modifying an isoperimetric minimizing cluster on a null set, we may and will assume that its cells $\Omega_i$ are open and satisfy $\Omega_i = \interior \overline{\Omega_i}$ and $\overline{\partial^* \Omega_i} = \partial \Omega_i$ for all $i$. Its boundary is then defined as $\Sigma = \cup_{i=1}^q \partial \Omega_i$. 
See Section \ref{sec:prelim} for additional information. 

\medskip

Our main result in this work is the following:

\begin{thm}\label{thm:intro-main}
Let $(M^n,g)$ be a smooth, connected, \textbf{simply connected}, homogeneous Riemannian manifold which is either \textbf{compact or has one end}. 
 Then an isoperimetric minimizing cluster on $(M^n,g)$  has  \textbf{connected} (and bounded) boundary $\Sigma$. 
\end{thm}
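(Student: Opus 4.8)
Proof plan. The plan is to argue by contradiction. Assuming that $\Sigma$ is disconnected, I will detach a bounded sub‑region of the cluster, slide it by an ambient isometry until it first touches another part of $\Sigma$, and thereby produce a competitor cluster with the same volume vector and the same total perimeter but a forbidden contact, contradicting the regularity theory for minimizing clusters.

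First I would dispose of boundedness of $\Sigma$: in the compact case there is nothing to prove, and when $M$ is noncompact with one end it follows from a standard concentration–compactness argument for finite‑volume minimizing clusters (no mass or perimeter can escape along the single end), which I will record. So assume $\Sigma$ is compact and write $\Sigma=\Sigma'\sqcup\Sigma''$ as a disjoint union of two nonempty unions of connected components, with $\delta:=d(\Sigma',\Sigma'')>0$. A key elementary observation is that, since the cells are open and hence disjoint from $\Sigma$, every connected component of a cell is a single chamber (connected component of $M\setminus\Sigma$); in particular, when $M$ is noncompact the exterior cell has a single unbounded component $R_\infty$, which is a single chamber.

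Next — and this is the technical heart, where both simple connectedness and the one‑end hypothesis are used — I would establish a structural statement: there exist a bounded open region $W$ (a union of chambers), a single cell $\Omega_b$, and the chamber $R_b$ of $\Omega_b$ adjacent to $\partial W$, such that $\partial W$ is a nonempty union of connected components of $\Sigma$ with $d(\partial W,\Sigma\setminus\partial W)\geq\delta$ and $\Sigma\setminus\partial W\neq\emptyset$, such that $\Omega_b$ is the only cell meeting $\partial W$ from outside $W$, and such that $\Omega_b$ also meets a nonempty part of $\Sigma\setminus\partial W$. The topological input is that simple connectedness forces $H_{n-1}(M;\Z_2)=0$, so every compact hypersurface‑cycle in $M$ separates, while one‑endedness supplies a unique unbounded complementary component; combining these, one may take $W$ to be the region enclosed by an innermost component of $\Sigma$ (or, when $\partial R_\infty$ is disconnected, the region enclosed by one of its components, with $\Omega_b$ the exterior cell), and then show — via the observation that a thin one‑sided collar of $\partial W$ lies in $M\setminus\Sigma$ and is connected, hence lies in a single chamber — that $\partial W$ has a single exterior cell. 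The delicate point is to exclude $Y$‑type junctions along $\partial W$ across which two distinct cells would appear outside $W$, and I expect this step to be the main obstacle, since it requires careful bookkeeping of the stratified structure of $\Sigma$.

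Finally, the sliding argument and the contradiction. Since $M$ is homogeneous and connected, the identity component of $\Isom(M)$ still acts transitively, so there is a path $\phi_t$ of isometries with $\phi_0=\Id$ moving some point of $W$ toward — and eventually past — a point of $\partial\Omega_b\setminus\partial W\subseteq\Sigma\setminus\partial W$. Let $[0,t_0)$ be the maximal interval on which $\phi_t(\overline W)\subseteq\overline{W\cup R_b}$, and for such $t$ define $\Omega^{(t)}$ by replacing $\Omega$ restricted to $\overline W$ with its isometric image supported on $\phi_t(\overline W)$ and filling all of $\overline{W\cup R_b}\setminus\phi_t(\overline W)$ with the single cell $\Omega_b$. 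Using $R_b\cap W=\emptyset$, a direct count gives $V(\Omega^{(t)})=V(\Omega)$; and since the interfaces interior to $W$ are merely translated while $\partial W$ keeps $\Omega_b$ on its exterior side throughout, $A(\Omega^{(t)})=A(\Omega)$ for all $t\in[0,t_0)$. By lower semicontinuity, $A(\Omega^{(t_0)})\leq A(\Omega)$, so $\Omega^{(t_0)}$ is again a minimizing cluster; but now $\phi_{t_0}(\partial W)$ touches $\partial\Omega_b\setminus\partial W$ at some point $p$, with the cell $\Omega_b$ pinched to zero thickness at $p$ between two interfaces of $\Sigma^{(t_0)}$ (and if the contact is $(n-1)$‑dimensional then in fact $A(\Omega^{(t_0)})<A(\Omega)$, already absurd). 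Such a tangential contact of two interfaces — a point of density $\geq 2$ of a type that is neither a $Y$‑ nor a $T$‑junction — is ruled out by the regularity and structure theory for minimizing clusters. This contradiction shows $\Sigma$ is connected, and, being bounded by the first step, the proof is complete. Specializing to $k=1$ recovers the single‑bubble statement, and a short homological count then yields that a minimizing set and its complement are individually connected.
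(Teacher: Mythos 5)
Your overall skeleton matches the paper's: refine to chambers, isolate a piece whose boundary is a union of components of $\Sigma$ with a single cell on its exterior, slide it by a path in $\Isom_0(M)$ to first contact, and derive a contradiction there. However, there are two genuine gaps. The first is the structural step you yourself flag as ``the main obstacle'': your collar argument for the existence of a single exterior cell $\Omega_b$ fails precisely at the $Y$-type (and worse) junctions of $\partial W$, where two or more distinct chambers lie on the outside, and an ``innermost component'' of $\Sigma$ need not have a connected exterior collar. The paper avoids all local analysis of junctions by a purely point-set argument (Theorem \ref{thm:topology}): after refining to connected cells (finiteness of the refinement needs Proposition \ref{prop:finite-CCs}, which you do not address), if $\Sigma$ is disconnected then some \emph{cell} $\Omega_\ell$ has disconnected boundary, and Proposition \ref{prop:simply-connected} (for a connected open set in a simply connected, locally path-connected space, the boundary is connected iff the complement is) shows $M\setminus\Omega_\ell$ is disconnected; its components are unions of cell closures, yielding $M=\bar\Omega_I\sqcup\Omega_\ell\sqcup\bar\Omega_J$ with $\partial\Omega_\ell=\partial\bar\Omega_I\sqcup\partial\bar\Omega_J$. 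This is where simple connectedness enters, not via $H_{n-1}(M;\Z_2)=0$ and a separation argument for the (non-manifold) components of $\Sigma$.

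The second gap is the contradiction at the contact time. Asserting that a tangential contact ``of a type that is neither a $Y$- nor a $T$-junction is ruled out by the regularity and structure theory'' is not a proof: in general dimension there is no classification of singular points of minimizing clusters that excludes such configurations, and --- as the paper emphasizes --- one cannot arrange for the first collision to occur at regular points of either piece. The paper's route is: (a) show the collision set has $\H^{n-1}$-measure zero, else $A(\Omega^{t_0})<A(\Omega)$ immediately; (b) blow up $\Omega$ at matched collision points $x,y$ to obtain stationary multiplicity-one cones $\Sigma^x,\Sigma^y$ with $0\in d\varphi_{t_0}(\Sigma^x)\cap\Sigma^y$; (c) invoke Wickramasekera's strong maximum principle (Theorem \ref{thm:SMP}) to get $\H^{n-2}\bigl(d\varphi_{t_0}(\Sigma^x)\cap\Sigma^y\bigr)>0$; (d) use a density count ($2/3+1/2>1$) to push a common point into the regular strata of both cones; (e) apply the classical Hopf maximum principle to get an $\H^{n-1}$-positive overlap, contradicting (a). None of this is replaceable by a citation to standard cluster regularity. (Minor point: boundedness of $\Sigma$ is not a concentration--compactness statement about the end; it follows from Almgren's density/truncation argument using the small-volume isoperimetric inequality, valid in bounded geometry irrespective of the number of ends.)
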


\begin{rem} \label{rem:intro-examples}
Theorem \ref{thm:intro-main} would be false without the simple connectedness assumption or restriction on the number of ends. To see this in dimension $n=1$, 
just consider the real line $\R$ (having 2 ends) or (non simply connected) sphere $\S^1$, whose isoperimetric minimizing $k$-clusters are obviously given by $k$ adjoined intervals (having disconnected boundary). To extend these counterexamples to higher dimension, consider the product $M^n = N^{n-1} \times \R$, where $N^{n-1}$ is a compact $(n-1)$-dimensional manifold; it was shown in \cite{Castro-Products,Gonzalo-Products,RitoreVernadakis-Products} that a single-bubble isoperimetric minimizer of large volume in $M^n$ is always of the form $N^{n-1} \times [a,b]$ and hence has disconnected boundary (this was first shown in \cite{Pedrosa-SphericalCylinders} for $N^{n-1} = \S^{n-1}$ the $(n-1)$-dimensional sphere). Choosing $N^{n-1}$ to in addition be a simply connected homogeneous Riemannian manifold (such as $\S^{n-1}$, $n \geq 3$) verifies that Theorem \ref{thm:intro-main} is false for $n$-dimensional simply connected manifolds with two ends (for $n \geq 3$; note that a two-dimensional simply connected non-compact manifold is necessarily homeomorphic to $\R^2$ and hence one-ended). The theorem is also false for any $n$-dimensional (non simply connected) torus $\R^n / \Z^n \simeq \S^1 \times \ldots \times \S^1$, since $\R^{n-1} / \Z^{n-1} \times [0,1/2]$ (whose boundary is disconnected) is known to be a single-bubble isoperimetric minimizer of volume $1/2$ (see e.g. \cite{EMilman-Slabs} and the references therein). 
\end{rem}

As an immediate corollary, we deduce:
\begin{corollary} \label{cor:intro-main}
Let $(M^n,g)$ be as in Theorem \ref{thm:intro-main}, and let $\Omega_1$ be a single-bubble isoperimetric minimizer in $(M^n,g)$. Then $\Omega_1$, $M \setminus \overline{\Omega_1}$ and $\partial \Omega_1$ are all connected. 
\end{corollary}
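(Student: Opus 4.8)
The plan is to deduce the corollary from Theorem~\ref{thm:intro-main} by elementary point-set and algebraic topology, together with the local structure of isoperimetric minimizers. Set $\Omega_2 := M\setminus\overline{\Omega_1}$, and assume $0 < V(\Omega_1) < V(M)$ so that $\Omega_1$ and $\Omega_2$ are both nonempty (otherwise the statement is trivial). By the normalization conventions fixed in the introduction (each cell $U$ satisfies $U = \interior\overline U$ and $\overline{\partial^* U} = \partial U$), the set $\Omega_2$ is open, $\partial\Omega_1 = \partial\Omega_2 = \Sigma$, $\overline{\Omega_i} = \Omega_i\sqcup\Sigma$ for $i=1,2$, and therefore $M = \Omega_1\sqcup\Sigma\sqcup\Omega_2$. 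Applying Theorem~\ref{thm:intro-main} to the $1$-cluster $(\Omega_1)$ --- whose induced $2$-partition is, after this normalization, $(\Omega_1,\Omega_2)$, with boundary $\partial\Omega_1\cup\partial\Omega_2=\Sigma$ --- immediately gives that $\Sigma = \partial\Omega_1$ is connected and bounded; this is one of the three asserted conclusions.

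For the other two I would run the Mayer--Vietoris sequence of the closed cover $M = \overline{\Omega_1}\cup\overline{\Omega_2}$, whose intersection is $\overline{\Omega_1}\cap\overline{\Omega_2} = \Sigma$. Working in \v{C}ech cohomology with $\Z/2$ coefficients (for which the Mayer--Vietoris sequence holds for closed covers of paracompact Hausdorff spaces, and which agrees with singular cohomology on the locally contractible sets at hand; alternatively, thicken $\overline{\Omega_i}$ to open neighborhoods deformation-retracting onto them), the relevant portion of the reduced sequence is
\[
	\tilde H^0(M) \longrightarrow \tilde H^0(\overline{\Omega_1}) \oplus \tilde H^0(\overline{\Omega_2}) \longrightarrow \tilde H^0(\Sigma).
\]
Both outer groups vanish, since $M$ and $\Sigma$ are connected (the latter by the previous paragraph), so by exactness the middle group vanishes as well; that is, $\overline{\Omega_1}$ and $\overline{\Omega_2}$ are connected. (The boundedness of $\Sigma$ and the stratified structure of $\Sigma$ and of the $\overline{\Omega_i}$ coming from regularity provide the tameness needed here.)

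Finally I would upgrade ``$\overline{\Omega_i}$ connected'' to ``$\Omega_i$ connected''. Suppose $\Omega_1$ were disconnected, say $\Omega_1 = A\sqcup A'$ with $A,A'$ nonempty, open and disjoint. Then $\overline{\Omega_1}=\overline A\cup\overline{A'}$, and any $p\in\overline A\cap\overline{A'}$ must lie in $\Sigma$ (a point of $\Omega_1$ has a neighborhood contained in one of $A,A'$) and satisfies that $B(p,r)\cap\Omega_1 = (B(p,r)\cap A)\sqcup(B(p,r)\cap A')$ is disconnected for every $r>0$. But a single-bubble isoperimetric minimizer is \emph{locally connected along its boundary}: for each $p\in\Sigma$ and all small $r>0$, the set $B(p,r)\cap\Omega_1$ is connected. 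Indeed, at a regular point $p$ this is clear because $\Sigma$ is there a smooth embedded hypersurface locally separating $\Omega_1$ from $\Omega_2$, while at a singular point it follows from the fact that the blow-up of $\Omega_1$ at $p$ is a connected cone; for $n\le 7$ there are no singular points, so this is immediate, and in general it is a standard consequence of the regularity theory of isoperimetric minimizers (see Section~\ref{sec:prelim}). Hence no such $p$ exists, so $\overline{\Omega_1} = \overline A\sqcup\overline{A'}$ is disconnected, contradicting the previous paragraph; thus $\Omega_1$ is connected, and the identical argument applied to $\Omega_2 = M\setminus\overline{\Omega_1}$ finishes the proof. The one non-formal ingredient here, and the step I expect to require the most care, is this local-connectedness property along $\Sigma$; everything else is soft topology and bookkeeping.
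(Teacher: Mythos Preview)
Your deduction that $\partial\Omega_1$ is connected directly from Theorem~\ref{thm:intro-main} is correct and is exactly what the paper does. The route you take for $\Omega_1$ and $M\setminus\overline{\Omega_1}$, however, diverges from the paper's and has a real gap, precisely where you anticipate it. The inference ``the blow-up of $\Omega_1$ at $p$ is a connected cone, hence $B(p,r)\cap\Omega_1$ is connected for small $r$'' is unjustified: the blow-up convergence of Theorem~\ref{thm:GMT} is only $w^*$ (equivalently $L^1_{\mathrm{loc}}$), and a connected measure-theoretic limit says nothing about the topology of the approximating sets. Upgrading this to a genuine local-connectedness statement at singular boundary points is not a standard consequence of the regularity theory and is not supplied by anything in Section~\ref{sec:prelim} or Section~\ref{sec:GMT}. (Your Mayer--Vietoris step also leans on local contractibility of $\overline{\Omega_i}$ and $\Sigma$, or on the existence of deformation-retracting thickenings, neither of which is clear at singular points; but that step is easily rescued by elementary point-set topology---if $\overline{\Omega_1}=C\sqcup C'$ then the connected set $\Sigma$ lies in one piece, say $C$, whence $C'\subset\Omega_1$ is clopen in $M$---or equivalently by Proposition~\ref{prop:simply-connected}.)

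The paper sidesteps local connectedness entirely. As the remark immediately following the corollary indicates, the connectedness of $\Omega_1$ is ``evident from the proof of Theorem~\ref{thm:intro-main}'': if $\Omega_1$ had two components, one slides one of them along a continuous path of isometries until first contact, preserving volume and perimeter throughout, and then invokes the strong maximum principle (Theorem~\ref{thm:SMP}) at the collision to produce a contradiction---exactly the mechanism that drives the proof of Theorem~\ref{thm:intro-main}, and one that needs no information whatsoever about the local topology of $\Omega_i$ near $\Sigma$. The same sliding argument, using the one-end hypothesis to guarantee that any putative extra component of $M\setminus\overline{\Omega_1}$ has finite volume and can be moved, handles the exterior cell.
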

\begin{rem}
It will be evident from the proof of Theorem \ref{thm:intro-main} that the connectedness of $\Omega_1$ in Corollary \ref{cor:intro-main} holds on \emph{any} connected homogeneous Riemannian manifold, regardless of its topology (but as the examples in Remark \ref{rem:intro-examples} demonstrate, this is not the case for $\partial \Omega_1$ nor $M \setminus \overline{\Omega_1}$, both of which may be disconnected). Showing that $\Omega_1$ is connected is fairly standard, but nevertheless requires a strong maximum principle which is valid not only on the regular part of $\partial \Omega_1$ -- see below. On the other hand, showing that $\partial \Omega_1$ is connected requires some new ingredients involving multi-bubble clusters (even though Corollary \ref{cor:intro-main} only pertains to the single-bubble case). 
\end{rem}

Connectedness results for the \emph{boundary} of single-bubble isoperimetric minimizers are well-known in the literature for certain classes of Riemannian manifolds $(M^n,g)$. These results typically do not require any homogeneity nor global topological information, but rather local curvature information. For example, it is known that whenever the Ricci curvature of $(M^n,g)$ is strictly positive, then the boundary of single-bubble isoperimetric minimizers (and even just stable ones, having non-negative second variation of area modulo the volume constraint) must be connected, and this extends to manifolds with convex boundary and manifolds with non-negative Ricci curvature, unless the minimizer's boundary is totally geodesic \cite{SternbergZumbrun,BayleRosales}. Another example is that of Riemannian surfaces of revolution with control over their Gauss curvature (see \cite{BenjaminiCao} and \cite[Chapter 2]{Ritore-IsoperimetricBook}). A more general condition which ensures the connectedness of minimizers $\Omega_1$ (but not their boundary) is strict concavity of the (single-bubble) isoperimetric profile \cite[Theorem 3.17]{Ritore-IsoperimetricBook}, but in practice this is guaranteed by having strictly positive (generalized) Ricci curvature. To the best of our knowledge, Corollary \ref{cor:intro-main} is the first example of a connectedness result for an isoperimetric minimizer \emph{and its boundary} which does not require any curvature assumptions; however, it does not apply to merely stable boundaries.

\subsection{Ingredients of proof}

The assumption of simple connectedness provides us with the following crucial information, which may be of independent interest: 

\begin{thm}[A $2$-connected partition of a simply connected space has connected boundary]\label{thm:topology}
Let $X$ be a (connected) \textbf{simply connected} locally path-connected metric space. 
Let $\Omega = (\Omega_1,\ldots,\Omega_q)$ be a collection of pairwise disjoint non-empty open cells $\{\Omega_i\}_{i \in [q]}$ so that $\bar \Omega_{[q]} = X$, where $\bar \Omega_K := \cup_{i \in K} \overline{\Omega_i}$ for $K \subset [q] = \{1,\ldots,q\}$. Assume that for all $i \in [q]$, $\Omega_i$ is connected and satisfies $\Omega_i = \interior \overline{\Omega_i}$.

Then, if $\Sigma = \cup_{i \in [q]} \partial \Omega_i$ is disconnected, there exists a disjoint partition of the index set $[q]$ into three non-empty sets $I$, $\{\ell \}$ and $J$ so that $X$ is the \textbf{disjoint union} of $\bar \Omega_I$, $\Omega_\ell$ and $\bar \Omega_J$, and 
$\partial \Omega_\ell$ is the disjoint union of the two non-empty sets $\partial \bar \Omega_I$ and $\partial  \bar \Omega_J$. 
\end{thm}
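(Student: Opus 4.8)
The plan is to reduce the statement to a connectivity property of a graph associated with the partition, and then use simple connectedness of $X$ to rule out all but the degenerate ``chain'' configuration that the theorem describes. First I would define the adjacency graph $G$ on vertex set $[q]$ where $i \sim j$ iff $\H^{n-1}(\Sigma_{ij}) > 0$ — or, more to the point in this purely topological setting, iff $\overline{\Omega_i} \cap \overline{\Omega_j}$ has nonempty interior in $\partial \Omega_i$ (equivalently, $\partial\Omega_i \cap \partial\Omega_j \neq \emptyset$, using $\Omega_i = \interior \overline{\Omega_i}$). The key preliminary observation is that $\Sigma = \cup_i \partial\Omega_i$ being disconnected forces a corresponding ``cut'' structure: writing $\Sigma$ as a disjoint union of two nonempty relatively clopen pieces $\Sigma^{(1)} \sqcup \Sigma^{(2)}$, each cell boundary $\partial\Omega_i$, being connected? — no, $\partial\Omega_i$ need not be connected, so instead I would argue that since each $\Omega_i$ is open and connected with $\Omega_i = \interior\overline{\Omega_i}$, the set $\Omega_i \cup \Sigma^{(1)}$ vs.\ the interaction of $\Omega_i$ with the two pieces must be analyzed: the point is that $X = \Omega_i \sqcup (\text{stuff})$ and $X$ connected plus $X = \bar\Omega_{[q]}$ forces each $\Sigma^{(k)}$ to touch $\overline{\Omega_i}$ for the cells it borders.

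The main step is to exploit simple connectedness. Consider the open cover of $X$ by the sets $U_k := X \setminus \overline{\Sigma^{(3-k)}}$ for $k=1,2$ (complements of the closure of the \emph{other} piece of $\Sigma$); more carefully, since $\Sigma^{(1)}$ and $\Sigma^{(2)}$ are each closed in $\Sigma$ which is closed in $X$, they are closed in $X$, and I can form $A := \bar\Omega_{I} $ and $B := \bar\Omega_{J}$ type sets by a Mayer–Vietoris / van Kampen argument. The precise mechanism: a disconnection of $\Sigma$ should produce a continuous surjection $X \to [0,1]$ that is locally constant off a ``thin'' set, or more robustly, one builds a nontrivial element of $H^1$ or $\pi_1$ unless the combinatorics collapse. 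Concretely, I expect the cleanest route is: show that removing $\Sigma^{(1)}$ from $X$ disconnects $X$ into pieces, each of which is a union of cells; simple connectedness (hence unicoherence — a simply connected locally path-connected space is unicoherent) forbids $X$ from being written as a union of two closed connected sets whose intersection is disconnected. Unicoherence is the crucial tool: it says if $X = C_1 \cup C_2$ with $C_1, C_2$ closed and connected, then $C_1 \cap C_2$ is connected. Applying this with $C_1 = \bar\Omega_I \cup \overline{\Omega_\ell}$ and $C_2 = \bar\Omega_J \cup \overline{\Omega_\ell}$ for an appropriate tripartition, a disconnected $\partial\Omega_\ell$ would contradict unicoherence — so I would set up the tripartition precisely so that the disconnection of $\Sigma$ localizes to a single cell $\Omega_\ell$ whose boundary splits.

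To pin down the tripartition: starting from $\Sigma = \Sigma^{(1)} \sqcup \Sigma^{(2)}$, define $I = \{i : \partial\Omega_i \subseteq \Sigma^{(1)} \text{ and } \partial\Omega_i \cap \Sigma^{(2)} = \emptyset, \ \overline{\Omega_i}\cap\overline{\Omega_\ell}\ne\emptyset \text{ for no crossing cell}\}$ — this needs care. Better: let $\ell$ be a cell whose boundary meets \emph{both} $\Sigma^{(1)}$ and $\Sigma^{(2)}$; such an $\ell$ exists because the adjacency graph $G$ is connected (if $G$ were disconnected, $X = \bar\Omega_{[q]}$ would itself be disconnected as a union of the closures over distinct components, contradicting $X$ connected), so there is an edge of $G$ joining a cell with boundary in $\Sigma^{(1)}$ to one with boundary in $\Sigma^{(2)}$; a cell adjacent to both sides works, or one walks along a $G$-path. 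Then set $I$ to be the cells ``on the $\Sigma^{(1)}$ side'' — the union of components of $G \setminus \{\ell\}$ whose cell-boundaries lie in $\Sigma^{(1)}$ — and $J$ the rest. One then verifies $X = \bar\Omega_I \sqcup \Omega_\ell \sqcup \bar\Omega_J$ as a disjoint union: overlaps would force an adjacency $i \sim j$ with $i \in I, j \in J$, but then $\partial\Omega_i \cap \partial\Omega_j$ is a nonempty subset of $\Sigma$ that lies in neither $\Sigma^{(1)}$ alone nor $\Sigma^{(2)}$ alone only if... — this is where I must show the only place $\Sigma^{(1)}$ meets $\Sigma^{(2)}$-adjacent structure is through $\partial\Omega_\ell$, forcing $\partial\Omega_\ell = \partial\bar\Omega_I \sqcup \partial\bar\Omega_J$ with both nonempty.

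\textbf{Main obstacle.} I expect the hard part to be the verification that the disconnection of $\Sigma$ localizes to exactly one cell — i.e., that one cannot have the two sides $I$ and $J$ each containing several cells with no single ``separating'' cell $\ell$. Ruling this out is precisely where simple connectedness (via unicoherence, or a direct van Kampen argument) must enter: a priori the boundary could disconnect ``along an interface'' $\Sigma_{IJ}$ between two big chunks $\bar\Omega_I$ and $\bar\Omega_J$ meeting along a genuine hypersurface, and one must show this hypersurface is then the common boundary $\partial\Omega_\ell$ of a single cell squeezed between them — for which I would argue that $X \setminus \Sigma_{IJ}$ has (at least) two components $\bar\Omega_I \setminus \Sigma_{IJ}$-ish and $\bar\Omega_J\setminus\Sigma_{IJ}$-ish, contradicting simple connectedness unless $\Sigma_{IJ}$ fails to separate, which by a careful topological argument forces the ``thin'' cell $\Omega_\ell$ to be interposed. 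Making ``$X \setminus \Sigma_{IJ}$ disconnected contradicts simple connectedness'' rigorous (it is a Jordan–Brouwer-type separation statement, valid via $\check{\mathrm{C}}$ech cohomology / Alexander duality on manifolds, but here $X$ is only a locally path-connected metric space) is the delicate point, and I would likely need the hypothesis $\Omega_i = \interior\overline{\Omega_i}$ together with local path-connectedness in an essential way to make the separation argument go through at the level of $\pi_1$/$H^1$.
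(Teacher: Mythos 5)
You have the right key tool in hand: unicoherence of a connected, simply connected, locally path-connected space is essentially the content of the paper's Proposition \ref{prop:simply-connected}, and your first step --- producing a cell $\Omega_\ell$ whose boundary meets both pieces $\Sigma^{(1)}$ and $\Sigma^{(2)}$ of the disconnection (equivalently, a cell with disconnected boundary) --- is the paper's first claim and is fine: if every $\partial \Omega_i$ lay entirely in one piece, grouping the cell closures accordingly would write $X$ as a union of two disjoint non-empty closed sets. The genuine gap is that you never execute the one step where simple connectedness actually does work, namely showing that $X \setminus \Omega_\ell$ is \emph{disconnected}. Your proposed application of unicoherence, with $C_1 = \bar \Omega_I \cup \overline{\Omega_\ell}$ and $C_2 = \bar \Omega_J \cup \overline{\Omega_\ell}$, presupposes the tripartition you are trying to construct and yields nothing even once you have it (there $C_1 \cap C_2 = \overline{\Omega_\ell}$ is connected, so no contradiction arises); it also points in the wrong logical direction, since the theorem is a structure statement, not an assertion that $\Sigma$ must be connected. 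The correct application is to $C_1 = \overline{\Omega_\ell}$ and $C_2 = X \setminus \Omega_\ell$, whose intersection is $\partial \Omega_\ell$: if $C_2$ were connected, unicoherence would force $\partial \Omega_\ell$ to be connected, so a disconnected $\partial \Omega_\ell$ forces $X \setminus \Omega_\ell$ to be disconnected.

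Once that is in place, the remainder is elementary and your ``main obstacle'' evaporates. The connected components $\{C_k\}$ of the closed set $X \setminus \Omega_\ell$ are closed; each connected closure $\overline{\Omega_j}$, $j \neq \ell$, lies in exactly one of them, so $C_k = \cup_{j \in I_k} \overline{\Omega_j}$ for a partition $\{I_k\}$ of $[q] \setminus \{\ell\}$; taking $I = I_k$ for one component and $J$ the union of the rest gives the disjoint decomposition of $X$ into $\bar \Omega_I$, $\Omega_\ell$, $\bar \Omega_J$ and the splitting $\partial \Omega_\ell = \partial \bar \Omega_I \cup \partial \bar \Omega_J$ with the two parts disjoint and non-empty. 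No Jordan--Brouwer-type separation, van Kampen, or cohomological argument is needed, and there is no issue of ``localizing the disconnection to a single cell'': one works with \emph{any} cell of disconnected boundary and lets the components of its complement do the bookkeeping. Two smaller points: your adjacency criterion $\H^{n-1}(\Sigma_{ij}) > 0$ has no meaning here, as $X$ carries no measure (use $\overline{\Omega_i} \cap \overline{\Omega_j} \neq \emptyset$); and you should verify that unicoherence holds in the stated generality --- the paper instead cites a direct proof of the boundary/complement equivalence using only simple connectedness and the two consequences of local path-connectedness it needs.
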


\begin{rem}
The assertion of Theorem \ref{thm:topology} is false without the assumption of simple connectedness, 
as witnessed e.g.~by a partition of the cylinder $(\R / \Z) \times \R^{n-1}$ into $q \geq 3$ vertical strips -- such a partition's boundary is disconnected and the claim clearly fails for it. 
\end{rem}

Theorem \ref{thm:topology} was implicitly taken for granted in our previous work \cite[Lemma 9.3]{EMilmanNeeman-TripleAndQuadruple}, which dealt with the case that $X \in \{\R^n,\S^n\}$. However, in reviewing the argument there, we realized that it is worth emphasizing that simple connectedness is the crucial property being employed, motivating us to write up the present note.

\bigskip

The proof of Theorem \ref{thm:intro-main} now proceeds as follows. Given an isoperimetric minimizing cluster $\Omega$ on a connected homogeneous Riemannian manifold $(M,g)$, it is known that its boundary $\Sigma$ must be bounded. We pass to a refinement of this cluster $\tilde \Omega$ given by the connected components of its cells, i.e.~the connected components of $M \setminus \Sigma$. Using some known isoperimetric properties of homogeneous Riemannian manifolds, the refined partition $\tilde \Omega$ must still have finitely many cells. Furthermore, the assumption that $M$ has at most one end ensures that there is still at most one cell whose volume is infinite. Consequently, $\tilde \Omega$ is a genuine cluster, which must still be minimizing as it has the same boundary $\Sigma$ as $\Omega$. The idea now is a standard argument in geometric measure theory: if $\Sigma$ were disconnected, we could move one piece of the boundary isometrically until the first time it hits the second piece. Using the crucial assumption of simple connectedness, Theorem \ref{thm:topology} ensures the existence of an appropriate piece of the boundary to move so that the volumes of all cells are preserved, even in the presence of a single infinite volume cell; this ensures that the modified cluster remains an isoperimetric minimizer throughout this process. The contradiction at the time of collision is achieved using a strong maximum principle due to Ilmanen \cite{Ilmanen-SMP} and refined by Wickramasekera \cite{Wickra-SMP}, which applies not only to the regular part of the boundary (as in most previous installments of this argument), but to a certain singular part as well. 

\smallskip

We emphasize that for general homogeneous Riemannian manifolds, we don't know how to ensure that the first collision will occur at regular boundary points. Indeed, even if we pick a pair of closest (and hence regular) points $x,y$ on each piece of the disconnected boundary, and select a continuous path of isometries $\{\varphi_t\}$ which map $x$ to $y$ at time $t=1$, we cannot ensure in general that the first collision between the boundary pieces will not occur at an earlier time $t_0 < 1$. This is possible on certain particular classes of homogeneous Riemannian manifolds (such as $\delta$-homogeneous or generalized normal homogeneous ones \cite{DeltaHomogeneousManifolds,HomogeneousGeodesicsBook}), possessing a transitive subgroup of isometries of maximal displacement, but to the best of our understanding, not for more general classes, not even for symmetric spaces of negative curvature like hyperbolic space. 
Consequently, handling general homogeneous Riemannian manifolds requires a fair amount of additional work.

\begin{rem}
As will be evident from the proof of Theorem \ref{thm:intro-main}, when $M$ is non-compact, we can remove the assumption that $(M,g)$ is one-ended if we assume that the minimizing $k$-cluster $\Omega$ satisfies that its infinite volume cell $\Omega_{k+1}$ has a single infinite volume connected component. However, in practice, we do not know how to ensure this without assuming that $M$ is one-ended. 
\end{rem}

The rest of this work is organized as follows. In Section \ref{sec:topology} we provide a proof of Theorem \ref{thm:topology}. In Section \ref{sec:prelim} we recall some geometric preliminaries and verify that an isoperimetric minimizing cluster on a homogeneous Riemannian manifold has finitely many connected components. In Section \ref{sec:GMT} we recall some facts from geometric measure theory and provide a proof of Theorem \ref{thm:intro-main}. 

\bigskip

\noindent
\textbf{Acknowledgments.} We thank Otis Chodosh for directing us to Ilmanen's strong maximum principle and providing helpful references, as well as Frank Morgan for suggesting to utilize the cone densities.

\section{Topology - Proof of Theorem \ref{thm:topology}} \label{sec:topology}

The topological information we require is summarized in the following proposition. 

\begin{proposition} \label{prop:simply-connected}
Let $X$ be a connected, simply connected, locally path-connected metric space (in particular, this holds if $X$ is a simply connected paracompact topological manifold). 
Let $\Omega \subset M$ be an open, connected subset. Then $\partial \Omega$ is (dis)connected if and only if $X \setminus \Omega$ is (dis)connected. 
\end{proposition}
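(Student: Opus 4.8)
The plan is to prove the harder direction—that $X \setminus \Omega$ disconnected implies $\partial \Omega$ disconnected—and then obtain the converse, which is essentially elementary, by a separate short argument. Throughout, the key structural facts are that $\Omega$ is open and connected, and that $X$ is connected, simply connected and locally path-connected (so in particular locally connected, and connectedness coincides with path-connectedness for open subsets).

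\emph{Setup.} First I would record that $X \setminus \Omega = \overline{\Omega}^c \cup \partial\Omega$ is closed, and that $\partial \Omega$ is precisely the ``frontier'' separating $\Omega$ from its exterior $E := \interior(X \setminus \Omega) = X \setminus \overline{\Omega}$. The core topological input I expect to need is a separation/duality statement: in a simply connected, locally connected space, a connected open set cannot be ``surrounded'' by its boundary in more than one essential piece. Concretely, I would argue the contrapositive of the hard direction: \emph{assume $\partial \Omega = F_1 \sqcup F_2$ with $F_1, F_2$ nonempty, disjoint, and closed (hence each open in $\partial\Omega$); I want to deduce $X \setminus \Omega$ is connected.} Since $F_1, F_2$ are disjoint closed subsets of the closed set $X \setminus \Omega$, write $X \setminus \Omega = C_1 \sqcup C_2$ where I try to distribute the exterior part $E$ and show each $C_j \supset F_j$ is clopen in $X\setminus\Omega$; the issue is whether $E$ itself can be split compatibly. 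The clean way: consider the open cover of $X$ by $U_1 := \Omega \cup N_1$ and $U_2 := \Omega \cup N_2$, where $N_j$ is a suitable open neighborhood of $F_j$ in $X$ chosen (using that $F_1, F_2$ are closed and disjoint in the metric space $X$, so at positive distance locally, or using normality) so that $N_1 \cap N_2 \cap (X\setminus\Omega) = \emptyset$. Then $U_1 \cap U_2 \supseteq \Omega$ and one shows $U_1 \cap U_2$ deformation-retracts to / is homotopy equivalent to $\Omega$ up to the pieces that matter; a Mayer–Vietoris or van Kampen argument on $X = U_1 \cup U_2$, using $\pi_1(X) = 0$ and $\pi_0$, forces a contradiction unless one of the $F_j$ is empty or $X\setminus\Omega$ fails to be separated by the $F_j$. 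I expect this van Kampen/Mayer–Vietoris bookkeeping—correctly arranging the neighborhoods $N_j$ so that intersections behave, and translating ``$\partial\Omega$ disconnected'' into a nontrivial element of $\pi_0$ or $\pi_1$ that $X$ cannot support—to be \textbf{the main obstacle}, because one must be careful that $\Omega$ may be wild near its boundary (no manifold structure, no collar).

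\emph{Alternative cleaner route.} A possibly smoother approach avoiding homotopy machinery: use the characterization of simple connectedness via the nonexistence of certain separations, or directly use Čech cohomology / the Phragmén–Brouwer properties. Recall a space $X$ has the \emph{Phragmén–Brouwer property} if whenever $A, B$ are disjoint closed sets and $p, q \in X$ are not separated by $A$ nor by $B$, they are not separated by $A \cup B$. It is classical that a connected, locally connected metric space is unicoherent iff it has the Phragmén–Brouwer property, and simply connected (nice) spaces are unicoherent. I would invoke unicoherence of $X$: $X = \overline{\Omega} \cup (X \setminus \Omega)$ is a union of two closed connected sets (after checking $\overline{\Omega}$ is connected, which follows from $\Omega$ connected, and handling $X\setminus\Omega$—here is where one must work, since a priori $X \setminus \Omega$ need not be connected, which is exactly what we are trying to prove). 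So instead apply unicoherence to $X = \overline{\Omega} \cup \overline{E}$ where $E = X \setminus \overline\Omega$: IF $\overline{E}$ is connected then $\overline\Omega \cap \overline E$ is connected; since $\overline\Omega \cap \overline E \subseteq \partial\Omega$ and in fact one checks $\partial\Omega \subseteq \overline\Omega \cap \overline E \cup (\text{nothing})$... one must relate $\partial\Omega$ to $\overline\Omega\cap\overline E$ precisely. The remaining gap—the possibility that $E$ has several components—would then be closed by an induction on components or by a direct connectedness argument showing each component of $X \setminus \Omega$ has boundary meeting $\Omega$ in the prescribed way. I would present whichever of these two routes (van Kampen vs. unicoherence) turns out to require fewer ad hoc hypotheses, and I anticipate needing the local path-connectedness precisely to pass between ``connected'' and ``path-connected'' and to guarantee the neighborhoods $N_j$ can be taken connected.
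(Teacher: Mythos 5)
Your proposal does not yet constitute a proof: both routes are left with self-acknowledged open steps (``I expect this \dots\ to be \textbf{the main obstacle}'', ``the remaining gap \dots\ would then be closed by \dots''), and the logical structure is scrambled. You label ``$X\setminus\Omega$ disconnected $\Rightarrow\partial\Omega$ disconnected'' as the hard direction, but that implication needs only connectedness of $X$: if $X\setminus\Omega=C_1\sqcup C_2$ with $C_1,C_2$ closed, nonempty and $\partial\Omega\cap C_1=\emptyset$, then $C_1\subseteq X\setminus\overline\Omega$ is open in the open set $X\setminus\overline\Omega$ (being open in $X\setminus\Omega$), hence clopen, nonempty and proper in $X$ --- a contradiction. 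The direction that genuinely uses simple connectedness is the converse, and in the middle of your argument you state its target backwards (``assume $\partial\Omega=F_1\sqcup F_2$\dots; I want to deduce $X\setminus\Omega$ is \emph{connected}''), which is the negation of what the proposition asserts. The same confusion derails your unicoherence route: in the contrapositive one \emph{assumes} $X\setminus\Omega$ is connected, so there is nothing circular about applying unicoherence to $X=\overline\Omega\cup(X\setminus\Omega)$; these are two closed connected sets (closure preserves connectedness of $\Omega$) whose intersection is exactly $\overline\Omega\setminus\Omega=\partial\Omega$ since $\Omega$ is open, and unicoherence then yields $\partial\Omega$ connected at once. Your substitute decomposition $X=\overline\Omega\cup\overline E$ with $E=X\setminus\overline\Omega$ manufactures a real difficulty --- $\overline\Omega\cap\overline E$ can be a proper subset of $\partial\Omega$ (e.g.\ a punctured ball) --- that the correct decomposition avoids entirely.

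That said, the second route is the right idea and is essentially what the paper relies on: the quoted proof is a one-line reduction to a cited reference treating $X=\R^n$, whose argument (in the spirit of Eilenberg's criterion: simply connected plus locally path-connected implies every map to $\S^1$ is null-homotopic, hence $X$ is unicoherent and has the Phragm\'en--Brouwer property) uses only simple connectedness and local path-connectedness. To complete your write-up you would need to (i) fix the direction bookkeeping as above, (ii) prove or precisely cite the unicoherence of a connected, simply connected, locally path-connected metric space, and (iii) discard the van Kampen/Mayer--Vietoris sketch, which as written never produces the contradiction it promises.
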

\begin{proof}
This is known to experts, and in the case of $X = \R^n$ was explicitly proved in \cite{ConnectednessOfBoundaryAndComplement}. 
However, the proof in \cite{ConnectednessOfBoundaryAndComplement} only uses that $X$ is a simply connected metric space 
with the property that any connected component of an open set is open, and any open connected set is path-connected; the latter two properties are ensured by the assumption that $X$ is locally path-connected \cite{Munkres-TopologyBook2ndEd}. 
\end{proof}

This is consistent with the case $q=2$ of Theorem \ref{thm:topology}: since $\{1,2\}$ cannot be partitioned into three non-empty disjoint sets, if $\Omega_1$ and $\Omega_2 = X \setminus \bar \Omega_1$ are both open and connected then $\Sigma = \partial \Omega_1 = \partial \Omega_2$ cannot be disconnected. Let us now extend this to arbitrary $q$. 

\begin{proof}[Proof of Theorem \ref{thm:topology}]
Since $\bar \Omega_{[q]} = X$ with $\{\Omega_i\}$ open and pairwise disjoint, it follows that $X \setminus \bar \Omega_I = \interior \bar \Omega_{[q] \setminus I}$ for all $I \subseteq [q]$, 
and consequently, $\partial \bar \Omega_I = \bar \Omega_I \cap \bar \Omega_{[q] \setminus I} = \partial \bar \Omega_{[q] \setminus I}$. In addition, since $\interior \overline{\Omega_i} = \Omega_i$ for all $i$, we see that $\partial \Omega_i = \partial \overline{\Omega_i} = \partial \bar \Omega_{\{i\}} = \partial \bar \Omega_{[q] \setminus \{i\}}$. 

We first claim that there exists a cell $\Omega_\ell$ with disconnected boundary. Otherwise, $\partial \Omega_i$ would be connected for all $i$. Since the union of two overlapping connected sets is connected, and since $\Sigma = \cup_i \partial \Omega_i$ is assumed to be disconnected, this would mean that there exists a non-empty proper subset $I \subset [q]$ so that $\cup_{i \in I} \partial \Omega_i$ and $\cup_{i \notin I} \partial \Omega_i$ are disjoint. 
But this would imply that $\bar \Omega_I$ and $\bar \Omega_{[q] \setminus I}$ are disjoint, and since both of these are closed, non-empty and their union is $X$, this would yield a contradiction to the connectedness of $X$ (which is implied by the simple-, and hence path-connectedness).  

Now, since $\Omega_\ell$ is open, connected and its boundary is disconnected, we know by Proposition \ref{prop:simply-connected} that its complement is also disconnected. Denote the connected components of $X \setminus \Omega_\ell$ by $\{C_k\}_{k \in \Lambda}$; we know that $|\Lambda| \geq 2$. 
Being the connected components of a closed set, $C_k$ are all closed (as closure preserves connectedness). 
Since $C_k$ are connected, each (connected) cell closure $\overline{\Omega_j}$ for $j \neq \ell$
 is either contained in $C_k$ or disjoint from it. Moreover, since $\cup_{j \neq \ell} \overline{\Omega_j} = \bar \Omega_{[q] \setminus\{\ell\}} = X \setminus \Omega_\ell = \cup_{k \in \Lambda} C_k$, it follows that $C_k = \cup_{j \in I_k} \overline{\Omega_j}$, where $\{I_k\}_{k \in \Lambda}$ is a disjoint partition of $[q] \setminus \{\ell \}$. Since $C_k \neq \emptyset$, it follows that $|I_k| \geq 1$. Note that this implies that $|\Lambda| \leq q-1 < \infty$.

Now, choose any $k \in \Lambda$, set $I = I_k$ (non-empty since $|I_k| \geq 1$) and $J = [q] \setminus (I \cup \{\ell\})$. We know that $\bar \Omega_I =  \cup_{j \in I} \overline{\Omega_j} = C_k$ and $\bar \Omega_J = \cup_{j \notin (I_k \cup \{\ell\}) } \overline{\Omega_j} = \cup_{m \neq k} C_m$; since $|\Lambda| \geq 2$ we see that the latter union is over a non-empty index set, and so $\bar \Omega_J \neq \emptyset$ and hence  $J$ is also non-empty (and is equal to $\cup_{m \neq k} I_m$). 

We thus confirm that $X$ is the disjoint union of $\bar \Omega_I$, $\Omega_\ell$ and $\bar \Omega_J$. 
It follows that $\partial \Omega_\ell = \partial \bar \Omega_{I \cup J}$ is the disjoint union of the two non-empty sets $\partial \bar \Omega_I = \partial \Omega_\ell \cap \bar \Omega_I$ and $\partial \bar \Omega_J = \partial \Omega_\ell \cap \bar \Omega_J$. 
\end{proof}

\section{Geometric preliminaries} \label{sec:prelim}

\subsection{Homogeneous Riemannian manifolds}

Note that by the classical Myers-Steenrod theorem \cite{PetersenBook2ndEd}, for any smooth Riemannian manifold $(M,g)$, $\Isom(M)$ is a Lie group acting smoothly on $M$, 
and its topology coincides with the compact-open topology, namely the uniform convergence of its action on compact subsets of $(M,g)$ (with respect to the geodesic distance $d$ on $(M,g)$). In particular, if $A,B$ are two compact subsets of $M$, then the map $\Isom(M) \ni \varphi \mapsto d(\varphi(A),B)$ is continuous, where $d(A,B) := \inf_{x \in A, y \in B} d(x,y)$.  

When $M$ is a homogeneous Riemannian manifold, $\Isom(M)$ acts transitively on $M$, and hence $M$ is homeomorphic to $\Isom(M) / \Stab(p)$, where $\Stab(p)$ is the closed stabilizer subgroup of $\Isom(M)$ which fixes (any) given point $p \in M$. When $M$ is in addition connected, it follows that $\Isom_0(M)$ also acts transitively on $M$, where $G_0$ denotes the identity (closed) connected component of a closed subgroup $G \subseteq \Isom(M)$. 

\begin{defn}
Let $(M,g)$ denote a smooth complete Riemannian manifold.
\begin{enumerate}
\item $(M,g)$ is said to have one end (or be one-ended) if $M \setminus K$ has a single unbounded connected component for any compact subset $K \subset M$. 
\item $(M,g)$ is said to have bounded geometry if its Ricci curvature is uniformly bounded below and the volume of geodesic balls of radius $1$ is uniformly bounded below away from $0$. 
\item $(M,g)$ is said to be co-compact if there exists a compact subset $K \subset M$ so that:
\[ M = \bigcup_{T \in \Isom(M)} T(K) . 
\] \end{enumerate}
\end{defn}

Note that a homogeneous Riemannian manifold is trivially co-compact and therefore has bounded geometry, since $\Isom(M)$ acts transitively on $M$. 

\subsection{Isoperimetric minimizers}

Given a smooth complete Riemannian manifold $(M^n,g)$, the metric $g$ induces a Riemannian volume measure $V$ and geodesic distance $d$ on $M$. 
Let $B(x,r)$ denote the geodesic ball in $M$ of radius $r > 0$ centered at $x \in M$. Given a measurable subset $E \subset M$ and $x \in M$, we denote:
\begin{equation} \label{eq:theta}
\theta(E,x,r) := \frac{V(E \cap B(x,r))}{V(B(x,r))} .
\end{equation}
We will frequently use that $E$ of locally finite perimeter satisfies
\begin{equation} \label{eq:half}
\partial^* E \subset E^{(1/2)} := \{ x \in M : \lim_{r \rightarrow 0^+} \theta(E,x,r) = 1/2\},
\end{equation}
the set of points of density $1/2$ of $E$ \cite[Corollary 15.8]{MaggiBook}. 

Recall that a $k$-cluster $\Omega$ is called an isoperimetric minimizer of volume $V(\Omega) \in [0,\infty)^k \times [0,\infty]$ if $A(\Omega') \geq A(\Omega)$ for any other $k$-cluster $\Omega'$ with $V(\Omega') = V(\Omega)$, where 
\begin{equation} \label{eq:A-equiv}
A(\Omega) := \sum_{1\leq i < j \leq k+1} \H^{n-1}(\partial^* \Omega_i \cap \partial^* \Omega_j) = \frac{1}{2} \sum_{i=1}^{k+1} \H^{n-1}(\partial^* \Omega_i)
\end{equation}
 (see \cite[Proposition 29.4]{MaggiBook} for the equivalence between the two alternative definitions of $A(\Omega)$ above). In that case, we will also say that the $k$-cluster or $(k+1)$-partition $\Omega$ is minimizing.

\begin{lemma} \label{lem:good-cells} 
Let $\Omega$ be a $q$-partition of a Riemannian manifold $(M^n,g)$. Then:
\begin{enumerate}
\item \label{it:triv} $\H^{n-1}(\cup_{i=1}^{q} \partial \Omega_i) \geq  \H^{n-1}(\cup_{i=1}^{q} \partial^*\Omega_i)  = A(\Omega)$. 
\end{enumerate}
Each cell $\Omega_i$ may be modified on a null-set (thereby not altering $V(\Omega)$ nor $A(\Omega)$) so that
\begin{enumerate}
\setcounter{enumi}{1}
\item \label{it:good1} $\Omega_i$ is open, $\overline{\partial^* \Omega_i} = \partial \Omega_i$ and $\Omega = (\Omega_1,\ldots,\Omega_{q})$ remains a $q$-partition. 
\item \label{it:good2} For any union $\Lambda_i$ of connected components of $\Omega_i$, $\Lambda_i = \interior \overline{\Lambda_i}$; in particular, this holds for $\Lambda_i = \Omega_i$ and for each individual connected component of $\Omega_i$. 
\end{enumerate}
In addition, if $\Omega$ is an isoperimetric minimizer (and hence so is the modified partition), then:
\begin{enumerate}
\setcounter{enumi}{3}
\item \label{it:good3} Each modified cell satisfies $\H^{n-1}(\partial \Omega_i \setminus \partial^* \Omega_i) = 0$. 
\item \label{it:good4} In particular, the modified partition satisfies $\H^{n-1}(\cup_{i=1}^{q} \partial \Omega_i) = A(\Omega)$. 
\end{enumerate}
\end{lemma}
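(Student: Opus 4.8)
The plan is to prove the parts of Lemma~\ref{lem:good-cells} in sequence, relying on standard facts from geometric measure theory, chiefly the structure of reduced boundaries and the regularity theory for isoperimetric minimizing clusters. Part~\eqref{it:triv} is essentially a tautology: $\partial^*\Omega_i \subseteq \partial\Omega_i$ always, so the inequality is monotonicity of $\H^{n-1}$ under inclusion, while the equality $\H^{n-1}(\cup_i \partial^*\Omega_i) = A(\Omega)$ follows from \eqref{eq:A-equiv} together with the fact that the interfaces $\Sigma_{ij} = \partial^*\Omega_i \cap \partial^*\Omega_j$ cover $\cup_i \partial^*\Omega_i$ up to an $\H^{n-1}$-null set (a point of density $1/2$ of $\Omega_i$ that lies in $\partial^*\Omega_i$ must be a point of density $1/2$ of exactly one other cell, by the partition property and \eqref{eq:half}; the exceptional set where three or more cells accumulate has dimension $\le n-2$).

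For \eqref{it:good1}--\eqref{it:good2}, I would carry out the standard ``measure-theoretic normalization'' of a set of locally finite perimeter. Replace each $\Omega_i$ by its set of density-$1$ points $\Omega_i^{(1)}$ (or, more precisely, by the set of points having a neighborhood contained in $\Omega_i$ up to null sets); this representative is open, differs from $\Omega_i$ by a null set hence preserves $V$ and $\partial^*$, and one checks the collection is still pairwise disjoint and exhausts $M$ up to measure zero. The identity $\overline{\partial^*\Omega_i} = \partial\Omega_i$ for this representative is a known consequence of the density characterization: every point of $\partial\Omega_i$ has density strictly between $0$ and $1$, and such points are approached by points of $\partial^*\Omega_i$ (use that the essential boundary equals the closure of the reduced boundary up to $\H^{n-1}$-null sets combined with the relative-isoperimetric/density lower bounds). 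For \eqref{it:good2}, given a union $\Lambda_i$ of connected components of $\Omega_i$, it is open, so $\Lambda_i \subseteq \interior\overline{\Lambda_i}$; conversely, a point $x \in \interior\overline{\Lambda_i}$ has a neighborhood $U$ with $U \subseteq \overline{\Lambda_i}$, and since the other components of $\Omega_i$ and the other cells are open and disjoint from $\Lambda_i$, a density/connectedness argument forces $U$ to meet $\Lambda_i$ in a set of full measure, and then openness of $\Lambda_i$ together with connectedness of $U$ (shrinking $U$ to a ball) gives $x \in \Lambda_i$.

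For \eqref{it:good3}--\eqref{it:good4}, which are the only parts using minimality, I would invoke the regularity theory for isoperimetric clusters (Almgren, Taylor, and the general cluster regularity as in \cite{MaggiBook}): for a minimizing cluster the boundary $\Sigma = \cup_i \partial\Omega_i$ decomposes into the regular part (a smooth hypersurface, an open dense subset of $\Sigma$ which coincides with the reduced boundary there) and a singular set $\Sigma_{\mathrm{sing}}$ of Hausdorff dimension at most $n-2$ (indeed $n-8$ away from the $\Y$- and $\T$-type singularities in low dimensions, but the codimension-$2$ bound is all that is needed). Since $\partial\Omega_i \setminus \partial^*\Omega_i \subseteq \Sigma_{\mathrm{sing}}$ up to an $\H^{n-1}$-null set, it has $\H^{n-1}$-measure zero, giving \eqref{it:good3}; then \eqref{it:good4} is immediate by combining \eqref{it:good3} with \eqref{it:triv}, since $\H^{n-1}(\cup_i \partial\Omega_i) \le \H^{n-1}(\cup_i \partial^*\Omega_i) + \sum_i \H^{n-1}(\partial\Omega_i \setminus \partial^*\Omega_i) = A(\Omega)$, and the reverse inequality is \eqref{it:triv}. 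The main obstacle is carrying out \eqref{it:good1}--\eqref{it:good2} cleanly without minimality — one must be careful that the chosen representative is genuinely open and that the identity $\overline{\partial^*\Omega_i}=\partial\Omega_i$ survives the normalization; this is where the density lower bounds from the relative isoperimetric inequality (valid on any manifold with bounded geometry, and locally on any manifold) do the real work, and I would cite \cite[Corollary 15.8 and Proposition 12.19]{MaggiBook} or their Riemannian analogues rather than reprove them.
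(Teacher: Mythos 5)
Your proposal follows essentially the same route as the paper: normalize each cell to the open density representative of \cite[Theorem 12.19]{MaggiBook} so that $\partial\Omega_i=\{x:\theta(\Omega_i,x,r)\in(0,1)\ \forall r>0\}$, use \eqref{eq:half} and the interface identity \eqref{eq:A-equiv} for part~\eqref{it:triv}, and cite the density-estimate/regularity literature for parts~\eqref{it:good3}--\eqref{it:good4}. Two points need repair. First, in part~\eqref{it:triv} your claim that the set where three or more cells ``accumulate'' has dimension $\le n-2$ is not available for a general (non-minimizing) partition; what is true, and what suffices, is that triple intersections of \emph{reduced} boundaries are empty (three density-$1/2$ conditions are incompatible by \eqref{eq:half}), and that $\H^{n-1}$-a.e.\ point of $\cup_i\partial^*\Omega_i$ lies on exactly one interface, which is the content of \cite[Proposition 29.4]{MaggiBook} already encoded in \eqref{eq:A-equiv}; inclusion--exclusion then gives $2A(\Omega)-A(\Omega)=A(\Omega)$.

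Second, and more substantively, the last step of your argument for part~\eqref{it:good2} does not follow as stated: from ``$U\cap\Lambda_i$ has full measure in $U$, $\Lambda_i$ is open, $U$ is a connected ball'' one cannot conclude $x\in\Lambda_i$ (take $\Lambda_i=U\setminus H$ for a hyperplane slice $H$: open, full measure, connected complement issues aside, yet $U\not\subseteq\Lambda_i$). Connectedness of $U$ is not the right tool. The correct finish, which is the paper's, uses the density characterization you already set up: if $x\in\interior\overline{\Lambda_i}$ but $x\notin\Lambda_i$, then $x\in\partial\Lambda_i\subseteq\partial\Omega_i$, so the normalization gives $\theta(\overline{\Omega_i},x,r)<1$ for all $r>0$ (using $V(\partial\Omega_i)=0$); on the other hand $B(x,r)\subseteq\overline{\Lambda_i}\subseteq\overline{\Omega_i}$ for small $r$ forces $\theta(\overline{\Omega_i},x,r)=1$, a contradiction. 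With that substitution the argument closes. Finally, for part~\eqref{it:good3} you invoke the full Almgren--Taylor regularity theory (codimension-two singular set), which does imply the claim but is much heavier than needed; the statement is exactly \cite[Theorem 30.1]{MaggiBook}, proved by elementary infiltration/density estimates that transfer to the Riemannian setting, and citing that keeps the lemma independent of deep regularity.
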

\begin{proof}
The first inequality in (\ref{it:triv}) is immediate since $\partial^* \Omega_i \subseteq \partial \Omega_i$. To see the second equality, we apply (\ref{eq:half}). 
Since $\Omega_a^{(1/2)} \cap \Omega_b^{(1/2)} \cap \Omega_c^{(1/2)} = \emptyset$ for distinct indices $a,b,c \in \{1,\ldots,q\}$ (otherwise the sum of densities at an intersection point would be at least $3/2$, a contradiction), it follows that $\partial^* \Omega_a \cap \partial^* \Omega_b \cap \partial^* \Omega_c = \emptyset$ as well. The inclusion-exclusion principle and (\ref{eq:A-equiv}) then imply that
\[
\H^{n-1}(\cup_{i=1}^{q} \partial^* \Omega_i) = \sum_{i=1}^{q} \H^{n-1}(\partial^* \Omega_i) - \sum_{1 \leq i < j \leq q} \H^{n-1}(\partial^* \Omega_i \cap \partial^* \Omega_j) = 2A(\Omega) - A(\Omega) = A(\Omega)
\]
(which holds also when $A(\Omega) = +\infty$). This concludes the proof of (\ref{it:triv}).

Each cell $\Omega_i$ is a set of locally finite perimeter, and so by \cite[Theorem 12.19]{MaggiBook} can be modified on a null set so that
\begin{equation} \label{eq:density0}
\overline{\partial^* \Omega_i} = \partial \Omega_i = \{ x \in M : \theta(\Omega_i,x,r) \in (0,1) \;\; \forall r > 0 \} .
\end{equation}
By Lebesgue's differentiation theorem, it follows in particular that $V(\partial \Omega_i) = 0$, and hence $\partial^* \Omega = \partial^* \interior \Omega_i$. 
Therefore, $\partial \interior \Omega_i \supseteq  \overline{\partial^* \interior \Omega_i} =  \overline{\partial^* \Omega_i} = \partial \Omega_i  \supseteq \partial \interior \Omega_i$ and so $\partial \interior \Omega_i = \partial \Omega_i$. 
Consequently, further modifying $\Omega_i$ by replacing it with its interior, we do not alter its topological boundary, and as this is again a null-set modification, it does not affect the reduced boundary $\partial^* \Omega_i$, so (\ref{eq:density0}) still holds. In fact, since $V(\partial \Omega_i) =0$ we also have:
\begin{equation} \label{eq:density}
 \partial \Omega_i =  \{ x \in M : \theta(\overline{\Omega_i},x,r) \in (0,1) \;\; \forall r > 0 \} . 
\end{equation}
The modified (open) cells $\Omega_i$ remain pairwise disjoint since otherwise, a non-empty intersection of a pair of cells would be an open set of positive volume, which is impossible since the original cells were pairwise disjoint and we only performed null-set modifications. The other requirements from a cluster do not change under null-set modifications. 

Now let $\Lambda_i$ be any union of connected components of $\Omega_i$. Since $M$ is locally connected and $\Omega_i$ is open, all of its connected components are also open, and so it $\Lambda_i$. Furthermore, since $\partial \Lambda_i \subset \partial \Omega_i$, 
(\ref{eq:density}) implies that for any $x \in \partial \Lambda_i$ we have $\theta(\overline{\Lambda_i},x,r) < 1$ for all $r > 0$. Consequently $\partial \Lambda_i \cap \interior \overline{\Lambda_i} = \emptyset$, and so if $x \in \interior \overline{\Lambda_i}$ then
  necessarily $x \in \Lambda_i$. In the other direction, since $\Lambda_i$ is already open we trivially have $\Lambda_i \subset \interior \overline{\Lambda_i}$. Therefore $\interior \overline{\Lambda_i} = \Lambda_i$, concluding the proof of statement (\ref{it:good2})

Statement (\ref{it:good3}) is established in \cite[Theorem 30.1]{MaggiBook} for clusters in Euclidean space, but the proof (based on local density estimates) carries over to the Riemannian setting. 
  Consequently, $\H^{n-1}(\cup_{i=1}^{q} \partial \Omega_i) = \H^{n-1}(\cup_{i=1}^{q} \partial^* \Omega_i)$. Statement (\ref{it:good4}) immediately follows from (\ref{it:triv}) and (\ref{it:good3}). 
\end{proof}

We will henceforth always modify the cells of a $k$-cluster $\Omega$ as in Lemma \ref{lem:good-cells}. Note that in that case, its boundary $\Sigma$ is uniquely defined as:
\[
\Sigma := \cup_{i=1}^{k+1} \overline{\partial^* \Omega_i} = \cup_{i=1}^{k+1} \partial \Omega_i ,
\]
and $M \setminus \Sigma$ is precisely the disjoint union of the open cells $\cup_{i=1}^{k+1} \Omega_i$. 

\begin{defn} \label{def:bounded}
A $k$-cluster $\Omega$ is called bounded if $\Sigma$ is a bounded set.
\end{defn}

\begin{proposition} \label{prop:exist}
Let $(M,g)$ be a smooth complete connected co-compact Riemannian manifold. Then for any $k \geq 1$ and $v \in [0,\infty)^k \times [0,\infty]$ with $\sum_{i=1}^k v_i \leq V(M)$, there exists an isoperimetric minimizing $k$-cluster $\Omega$ in $(M,g)$ with $V(\Omega) = v$. 
\end{proposition}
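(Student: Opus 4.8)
\textbf{Proof proposal for Proposition \ref{prop:exist}.}

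The plan is to run the standard concentration-compactness / direct method for existence of isoperimetric clusters, exploiting co-compactness to rescue the mass that tries to escape to infinity. First I would take a minimizing sequence of $k$-clusters $\Omega^{(m)}$ with $V(\Omega^{(m)}) = v$ and $A(\Omega^{(m)}) \to \inf$; we may assume each $\Omega^{(m)}$ has, say, $A(\Omega^{(m)}) \le A(\Omega^{(1)})+1 =: \Lambda_0$. Because $(M,g)$ has bounded geometry (it is co-compact, hence the relative isoperimetric inequality on unit balls and a uniform lower volume bound hold), one gets the usual compactness input: the characteristic functions $\mathbf{1}_{\Omega_i^{(m)}}$ have uniformly bounded perimeter on every fixed bounded region, so by BV-compactness and a diagonal argument we may extract a subsequence converging in $L^1_{loc}(M)$ to limit sets $\Omega_i^{(\infty)}$, pairwise disjoint, with $\sum_i \mathbf{1}_{\Omega_i^{(\infty)}} = 1$ a.e., and with the total perimeter lower semicontinuous on every bounded open set. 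The only obstruction to concluding is loss of mass at infinity: it could happen that $V(\Omega_i^{(\infty)}) < v_i$ for some finite-volume index $i$.

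To handle the escaping mass, I would use the co-compactness in the now-standard ``moving pieces back'' fashion (as in Morgan's book, or Leonardi–Rigot, adapted to the co-compact setting). Fix a compact $K$ with $M = \bigcup_{T \in \Isom(M)} T(K)$; a ball $B(p,R_0)$ works since $K$ is covered by finitely many such balls and we may enlarge. Given $\eps>0$, for $m$ large a fraction at least $1-\eps$ of each finite-volume cell's mass lies in $\Omega^{(m)} \cap B(x_m, R)$ for a suitable radius $R=R(\eps)$ and center $x_m$; choosing $T_m \in \Isom(M)$ with $x_m \in T_m(K)$ and replacing $\Omega^{(m)}$ by $T_m^{-1}\Omega^{(m)}$ (which leaves $V$ and $A$ unchanged since $T_m$ is an isometry), we may assume the bulk of the mass stays in the fixed ball $B(p, R+R_0)$. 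Iterating/refining this over $\eps \to 0$ and re-extracting, the limit cluster $\Omega^{(\infty)}$ retains all the prescribed finite volume: $V(\Omega_i^{(\infty)}) = v_i$ for $i=1,\dots,k$. The remaining cell $\Omega_{k+1}^{(\infty)}$ then automatically has the correct (possibly infinite) volume since the total is $V(M)$ and $\sum_i v_i \le V(M)$; here one uses that $L^1_{loc}$ convergence plus no mass loss for the bubbles forces $V(\Omega_{k+1}^{(\infty)}) = V(M) - \sum_{i\le k} v_i$ (interpreting $\infty - c = \infty$), so $\Omega^{(\infty)}$ is a genuine $k$-cluster with $V(\Omega^{(\infty)}) = v$. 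Finally, lower semicontinuity of perimeter gives $A(\Omega^{(\infty)}) \le \liminf_m A(\Omega^{(m)}) = \inf$, so $\Omega^{(\infty)}$ is an isoperimetric minimizer.

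The main obstacle — and the only place real care is needed — is the no-mass-loss step: making rigorous that, after applying isometries, the minimizing sequence can be taken to have no vanishing or splitting-to-infinity of the finite-volume mass. The clean way is a concentration-compactness trichotomy for the vector measure $\sum_{i\le k} V \llcorner \Omega_i^{(m)}$ on $M$: the ``vanishing'' alternative is excluded by the isoperimetric inequality on $(M,g)$ (a co-compact manifold with bounded geometry admits a uniform lower bound $A(\Omega) \ge c\, \min(V(\Omega), 1)^{(n-1)/n}$ of Euclidean type for small volumes, forcing mass to concentrate somewhere), and the ``dichotomy'' alternative is excluded by a cut-and-paste argument: if the mass splits into two clusters traveling apart, one can translate one piece back near the other using an isometry $T\in\Isom(M)$ with $T(K)$ meeting the far piece, at the cost of an error in perimeter that is $o(1)$ as the separation grows, contradicting strict subadditivity of the profile (or, more simply, producing a competitor with strictly smaller perimeter in the limit). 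I would cite the analogous arguments in the literature — e.g. \cite{MaggiBook} for the Euclidean cluster case and the standard adaptations to manifolds with bounded geometry / co-compact group actions — and indicate the modifications, rather than reproducing the trichotomy in full.
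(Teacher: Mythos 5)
Your proposal is correct and follows essentially the same route as the paper, which simply cites the standard existence result for minimizing clusters on co-compact manifolds (Morgan's extension of Almgren's argument, \cite{MorganBook5Ed}, and \cite{EMilmanNeeman-TripleAndQuadruple, Ritore-IsoperimetricBook}) rather than reproving it. Your sketch — direct method, BV/$L^1_{loc}$ compactness from bounded geometry, lower semicontinuity, and recovery of escaping mass via isometries from the co-compact action, deferring the concentration-compactness details to the literature — is precisely the argument those references carry out.
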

\begin{proof}
The existence of an isoperimetric minimizing $k$-cluster in a co-compact manifold is due to Morgan (see the comments following \cite[Theorem 13.4]{MorganBook5Ed}), extending an argument of Almgren \cite{AlmgrenMemoirs}. 
See Theorem \cite[Theorem 2.3]{EMilmanNeeman-TripleAndQuadruple}, and also \cite[Theorem 4.25]{Ritore-IsoperimetricBook} in the single-bubble case. 
\end{proof}

\subsection{Isoperimetric profile}

The (single-bubble) isoperimetric profile $\I = \I_{(M,g)}$ of a Riemannian manifold $(M,g)$ is defined as the function $\I : [0,V(M)] \rightarrow \R_+$ given by
\[
\I(v) := \inf \{ A(E) : V(E) = v , \text{$E$ is a measurable subset of $M$}  \} .
\]
A measurable subset $E \subset M$ for which $A(E) = \I(V(E))$ is called a (single-bubble) isoperimetric minimizer of volume $V(E)$. Note that $E$ is a single-bubble isoperimetric minimizer if and only if the corresponding 1-cluster is an isoperimetric minimizer. 

\begin{lemma} \label{lem:positive}
Let $(M,g)$ be a smooth complete connected co-compact Riemannian manifold. Then $\I(v) > 0$ for every $v \in (0,V(M))$. 
\end{lemma}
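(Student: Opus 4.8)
The plan is to prove Lemma \ref{lem:positive} by contradiction, exploiting co-compactness to translate a vanishing isoperimetric quantity into a global volume estimate. Suppose $\I(v) = 0$ for some $v \in (0, V(M))$. Then there is a sequence of measurable sets $E_j$ with $V(E_j) = v$ and $A(E_j) \to 0$. The idea is that a set of fixed positive volume but arbitrarily small perimeter must "spread out" over the whole manifold, and co-compactness forces this to contradict $v < V(M)$.

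First I would set up the local structure: since $(M,g)$ is co-compact, it has bounded geometry, so there is a uniform lower bound $c_0 > 0$ on the volume of unit balls and a uniform (relative) isoperimetric-type inequality on balls of radius $1$, say of the form $\min\{V(E \cap B(x,1)), V(B(x,1) \setminus E)\} \le C \, P(E; B(x,1))^{n/(n-1)}$ for all $x \in M$ — this follows from bounded geometry and a compactness/covering argument (pulling back to a fixed compact fundamental domain up to isometry). The key consequence is: if $P(E; B(x,1))$ is very small, then either $E$ or its complement occupies almost all of $B(x,1)$.

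Next I would run the following dichotomy on a maximal $1$-separated set of points $\{x_m\}$, whose balls $B(x_m,1)$ cover $M$ and whose balls $B(x_m, 1/2)$ are disjoint. For $E = E_j$ with $A(E_j)$ small, call a ball $B(x_m,1)$ "good" if $V(E_j \cap B(x_m,1)) \ge \tfrac12 V(B(x_m,1))$. The sum of perimeters $\sum_m P(E_j; B(x_m,1))$ is controlled by a bounded multiple of $A(E_j) \to 0$ (bounded overlap of the cover), so only a bounded-in-$j$ — in fact, eventually at most finitely many — of the balls can be "borderline" (neither $E_j$ nor its complement dominating); for the rest, the relative isoperimetric inequality forces $V(E_j \cap B) $ to be either at least $(1-\eta_j)V(B)$ or at most $\eta_j V(B)$ with $\eta_j \to 0$. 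Since $V(E_j) = v$ is a fixed finite positive number and each dominated-by-$E_j$ ball contributes volume $\gtrsim c_0$, there are at most $N := \lfloor v/(c_0/2)\rfloor + 1$ good balls. Hence $E_j$ is, up to volume $O(\eta_j) + (\text{bounded number of borderline balls})\cdot V(B)$ — wait, the borderline balls also contribute only boundedly; I would instead argue that $E_j$ is contained, up to small volume error, in the union of the good balls plus the borderline balls, a set of uniformly bounded volume $\le R$. But $v = V(E_j)$ is fixed, so this is not yet a contradiction; the real point is to push $j \to \infty$ and extract, after applying isometries, a limiting set.

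The cleaner route, and the one I would actually carry out: use co-compactness to recenter. For each $j$, pick a ball $B(x_{m(j)}, 1)$ carrying a definite fraction of $v$ (possible since $v > 0$ and the good balls number at most $N$, so one of them has $V(E_j \cap B) \ge v/N$); apply an isometry $\varphi_j$ moving $x_{m(j)}$ into the fixed compact set $K$. Then $\varphi_j(E_j)$ has volume $v$, perimeter $\to 0$, and a definite amount of volume in a fixed compact neighborhood of $K$. By $BV$ compactness (bounded geometry gives the needed local compactness) and lower semicontinuity of perimeter, a subsequence converges in $L^1_{loc}$ to a set $E_\infty$ with $P(E_\infty) = 0$ and $V(E_\infty) \ge v/N > 0$; a set of locally finite perimeter with zero perimeter in a connected manifold is (up to null sets) either $\emptyset$ or all of $M$, so $E_\infty = M$ up to a null set. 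But then $V(M) = V(E_\infty) \le \liminf_j V(\varphi_j(E_j)) = v < V(M)$ by Fatou applied to indicator functions converging a.e., a contradiction. (When $V(M) = \infty$ this already contradicts $V(E_\infty) = \infty$ versus $V(\varphi_j(E_j)) = v$.) The main obstacle is the compactness step — ensuring that the rescaled/retranslated sets $\varphi_j(E_j)$ do not lose all their mass to infinity and that the limit inherits zero perimeter; this is exactly where co-compactness (not mere completeness) is essential, since it confines the "center of mass" to the compact set $K$ and provides uniform local geometry for the $BV$ compactness and for the rigidity statement that zero perimeter forces a trivial set. The relative isoperimetric inequality on unit balls, uniform in the center, is the technical lemma underpinning the "at most $N$ good balls" bound and should be cited or quickly derived from bounded geometry.
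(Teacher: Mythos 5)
Your proof is correct in strategy but takes a genuinely different route from the paper. The paper's argument is much shorter: it first invokes Proposition \ref{prop:exist} (Almgren/Morgan existence theory on co-compact manifolds) to produce an actual minimizer $E$ of volume $v$, observes that $0 < v < V(M)$ and connectedness of $M$ force $\partial E \neq \emptyset$, picks a boundary point $x$ where $\theta(E,x,r) \in (0,1)$ for all small $r$, and applies the relative isoperimetric inequality in a single small ball $B(x,r)$ to conclude $A(E) > 0$. You instead avoid the existence theorem entirely and run a concentration--compactness argument on a minimizing sequence: uniform relative isoperimetric inequalities on a bounded-overlap cover by unit balls, a bound on the number of ``good'' balls, recentering by isometries into the compact fundamental domain, BV compactness and lower semicontinuity to extract a limit set of zero perimeter, and rigidity of zero-perimeter sets on a connected manifold. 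What your approach buys is independence from the (nontrivial) existence machinery; what it costs is that you must supply the uniform local isoperimetric inequality and local BV compactness yourself, both of which do follow from co-compactness. One step you should tighten: to guarantee that some good ball carries volume $\geq v/(2N)$, it is not enough that each bad ball satisfies $V(E_j \cap B_m) \leq C\,P(E_j;B_m)^{n/(n-1)}$ individually, since there may be infinitely many bad balls; you need the summed estimate $\sum_{\mathrm{bad}} V(E_j\cap B_m) \leq C \sum_m P(E_j;B_m)^{n/(n-1)} \leq C' A(E_j)^{n/(n-1)} \to 0$, using $\sum_m a_m^{p} \leq (\sum_m a_m)^{p}$ for $p = n/(n-1) \geq 1$ together with the bounded overlap of the cover. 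With that inserted, the argument closes.
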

\begin{proof}
Given $v \in (0,V(M))$, Proposition \ref{prop:exist} ensures the existence of a single-bubble isoperimetric minimizer $E$ of volume $v$. Since $v \in (0,V(M))$ and $M$ is connected, we must have $\partial E \neq \emptyset$. By (\ref{eq:density}), it follows that there exists $x \in \partial E$ and $r > 0$ smaller than the injectivity radius at $x$ so that $\theta(E,x,r) \in (0,1)$. Using the relative isoperimetric inequality inside $B(x,r)$ for the set $E \cap B(x,r)$ as in the proof of \cite[Lemma 3.4]{Ritore-IsoperimetricBook}, it follows that $E$ must have strictly positive perimeter, and hence $\I(v) > 0$. \end{proof}

\begin{proposition} \label{prop:small}
Let $(M^n,g)$ be a smooth complete connected Riemannian manifold with bounded geometry. Then its isoperimetric profile $\I$ is continuous on $(0,V(M))$, and there exists $v_0, c > 0$ so that $\I(v) \geq c v^{\frac{n-1}{n}}$ for all $v \in [0,v_0]$. 
\end{proposition}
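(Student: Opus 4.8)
The plan is to prove the two assertions of Proposition \ref{prop:small} by separate arguments: the power-law bound $\I(v) \ge c\, v^{(n-1)/n}$ for small $v$ will come from a covering argument that upgrades a relative isoperimetric inequality on unit balls to a global one, while the continuity of $\I$ on $(0,V(M))$ will follow from an elementary upper-semicontinuity argument (valid on any complete manifold) together with a lower-semicontinuity argument which is elementary when $V(M)<\infty$ (in which case bounded geometry forces $M$ to be compact) and is cited from the literature when $V(M)=\infty$.

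\emph{The lower bound.} Writing the bounded-geometry hypothesis as $\mathrm{Ric}_g \ge -(n-1)\kappa^2 g$ and $\inf_{x} V(B(x,1)) \ge v_1 > 0$, Bishop--Gromov produces constants $\omega_-,\omega_+>0$ depending only on $n,\kappa,v_1$ with $\omega_- r^n \le V(B(x,r)) \le \omega_+ r^n$ for all $x\in M$ and $r\le 1$, and uniform doubling at these scales. I would then fix a maximal $\tfrac12$-separated set $\{x_i\}_{i\in I}$; the balls $\{B(x_i,1)\}_{i\in I}$ cover $M$ and have overlap bounded by some $N=N(n,\kappa,v_1)$ (the disjoint balls $B(x_i,\tfrac14)$ have volume $\ge\omega_- 4^{-n}$, and those meeting a fixed $x$ lie in $B(x,\tfrac54)$, of volume $\le\omega_+(\tfrac54)^n$). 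The key input is that the relative isoperimetric inequality on a unit geodesic ball holds with a constant $C_0=C_0(n,\kappa)$ uniform over the center --- this is the local inequality used in the proof of Lemma \ref{lem:positive}, cf.\ \cite[Lemma 3.4]{Ritore-IsoperimetricBook}, the uniformity being a consequence of uniform doubling and Neumann--Poincar\'e inequalities under a Ricci lower bound --- so for every finite-perimeter $E$ and every $i$,
\[
\min\{V(E \cap B(x_i,1)),\, V(B(x_i,1) \setminus E)\}^{\frac{n-1}{n}} \le C_0\, \H^{n-1}(\partial^* E \cap B(x_i,1)).
\]
Setting $v_0:=v_1/2$, if $V(E)=v\le v_0$ then $V(E\cap B(x_i,1)) \le v \le v_1/2 \le \tfrac12 V(B(x_i,1))$ for each $i$, so the minimum above equals $V(E\cap B(x_i,1))$. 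Summing over $i$, using $\sum_i V(E\cap B(x_i,1)) \ge V(E)$ together with the subadditivity of $t\mapsto t^{(n-1)/n}$ on the left and the overlap bound on the right,
\[
v^{\frac{n-1}{n}} \le \sum_{i\in I} V(E \cap B(x_i,1))^{\frac{n-1}{n}} \le C_0 \sum_{i\in I} \H^{n-1}(\partial^* E \cap B(x_i,1)) \le C_0 N\, A(E).
\]
Hence $A(E) \ge c\, v^{(n-1)/n}$ with $c:=1/(C_0 N)$, and taking the infimum over all $E$ with $V(E)=v$ gives $\I(v)\ge c\, v^{(n-1)/n}$ on $[0,v_0]$.

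\emph{Continuity.} Upper semicontinuity of $\I$ on $(0,V(M))$ holds on any complete manifold: given $v$, $\eps>0$ and $E$ with $V(E)=v$, $A(E)<\I(v)+\eps$, one builds competitors of every volume near $v$ by adding a small ball at a density-$0$ point of $E$, or removing a small ball at a density-$1$ point of $E$ (such points exist since $V(M)-v>0$ and $v>0$ respectively), whose area exceeds $A(E)$ by at most $\H^{n-1}(\partial B(p,\rho))\to 0$ as $\rho\to 0^+$; hence $\limsup_{v'\to v}\I(v') \le A(E) < \I(v)+\eps$. For lower semicontinuity I would split into cases. If $V(M)<\infty$, then $M$ is compact (a maximal $1$-separated set is finite, since its disjoint $\tfrac12$-balls each have volume $\ge\omega_- 2^{-n}$, so $M$ is bounded, hence compact by Hopf--Rinow), and then for $v_k\to v\in(0,V(M))$ the minimizers $E_k$ of volume $v_k$ (which exist by Proposition \ref{prop:exist}, $M$ being compact hence co-compact) have $A(E_k)=\I(v_k)$ bounded near $v$ by upper semicontinuity, so a subsequence converges in $L^1$ to a finite-perimeter set $E$ with $V(E)=v$ and $A(E)\le\liminf_k A(E_k)$, giving $\I(v)\le\liminf_k \I(v_k)$. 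If $V(M)=\infty$, the continuity of $\I$ on $(0,\infty)$ for manifolds with bounded geometry is known (Mu\~noz Flores--Nardulli), and I would invoke it directly.

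\emph{Main obstacle.} The only nontrivial point is lower semicontinuity of $\I$ when $M$ is noncompact. The ``add/remove a small ball'' trick that yields upper semicontinuity would also yield lower semicontinuity if, for a near-minimizer $E_k$ of a neighboring volume $v_k$, one could find a point at which $E_k$ has density bounded below at \emph{all} scales $\lesssim |v-v_k|^{1/n}$; the covering/relative-isoperimetric argument above, combined with the uniform perimeter bound $A(E_k)\le C$, does produce a point at which $E_k$ has density bounded below at a \emph{fixed} scale, but making this uniform over a range of small scales (and over $k$) is delicate, which is why the noncompact case is best handled by citing the existing literature. The remaining ingredients --- the Bishop--Gromov comparisons and the center-uniformity of the relative isoperimetric constant on unit balls --- are routine consequences of the Ricci lower bound.
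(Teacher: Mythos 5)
Your proposal is correct, but it is worth noting that the paper's own ``proof'' of Proposition \ref{prop:small} is a pure citation: continuity is quoted from Flores--Nardulli \cite{FloresNardulli-ContinuityOfProfile} and the small-volume lower bound from Resende \cite{Resende-ClustersInBoundedGeometry}, which in turn follows Hebey \cite{Hebey-NonlinearAnalysisBook}. Your covering argument for the lower bound is essentially a write-up of that same Hebey-type argument (Bishop--Gromov comparisons, a maximal separated net with bounded overlap, and a center-uniform relative isoperimetric inequality on unit balls), so there is no real divergence in method --- you have just made explicit what the paper outsources. The only step you should be careful to justify is the uniform relative isoperimetric inequality on geodesic unit balls: under merely a Ricci lower bound (no injectivity radius control) the standard statement obtained from Buser's $L^1$-Poincar\'e inequality plus doubling plus the volume lower bound $V(B(x,r))\ge \omega_- r^n$ controls $\min\{V(E\cap B(x_i,1)),V(B(x_i,1)\setminus E)\}^{(n-1)/n}$ by the perimeter in a dilated ball $B(x_i,\sigma)$ with $\sigma\ge 1$; this costs you only a larger overlap constant $N$ in the summation and does not affect the conclusion. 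For continuity, your treatment is partly self-contained (upper semicontinuity via adding/removing small balls, and lower semicontinuity via BV compactness when $V(M)<\infty$, where bounded geometry indeed forces compactness), and for the genuinely hard case --- lower semicontinuity on a noncompact manifold, where mass can escape to infinity --- you correctly fall back on the same Flores--Nardulli reference the paper uses. In short: same route as the underlying literature, with more of the work displayed; no gaps.
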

\begin{proof}
The (local H\"older) continuity of $\I$ was established by Flores and Narduli in \cite[Theorem 2]{FloresNardulli-ContinuityOfProfile}.
 The isoperimetric lower bound for small volumes can be found in \cite[Lemma 3]{Resende-ClustersInBoundedGeometry} based on an argument of Hebey \cite[Lemma 3.2]{Hebey-NonlinearAnalysisBook} (see also \cite[Lemma 4.26]{Ritore-IsoperimetricBook} in the co-compact case). Both assertions utilize the bounded geometry assumption. 
\end{proof}

\begin{proposition} \label{prop:bounded}
Let $(M,g)$ be a smooth complete connected Riemannian manifold with bounded geometry. Then any isoperimetric minimizing $k$-cluster $\Omega$ in $(M^n,g)$ is bounded. 
\end{proposition}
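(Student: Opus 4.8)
The plan is to show that an unbounded minimizing $k$-cluster would lose a positive fraction of its perimeter-to-volume ratio "at infinity", contradicting minimality. Concretely, I would argue by contradiction: suppose $\Sigma = \cup_i \partial\Omega_i$ is unbounded. Fix a basepoint $o \in M$ and for $t > 0$ consider the truncation of the cluster obtained by intersecting each bubble $\Omega_i$ ($i \leq k$) with the geodesic ball $B(o,t)$ and dumping the removed mass into the exterior cell. Write $m(t) := \sum_{i=1}^k V(\Omega_i \setminus B(o,t))$ for the total volume that escapes the ball; since $\Sigma$ is unbounded and the bubbles have finite volume, $m(t) > 0$ for all $t$ and $m(t) \to 0$ as $t \to \infty$. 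The truncated object is no longer a cluster with the prescribed volumes, so to restore the volume constraint one either rescales/adds small balls far away or, more cleanly, compares against the isoperimetric profile: I would use that the perimeter saved by truncating is, for a.e. $t$, at least $A(\Omega) - A(\Omega_t) \geq (\text{interface removed}) - \H^{n-1}(\partial B(o,t) \cap \cup_i \Omega_i)$, while the volume discrepancy $m(t)$ must be "paid back" at a cost controlled by Proposition \ref{prop:small}.

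The key steps, in order, are: (i) set up the truncation and the coarea-type identity $m'(t) = -\sum_{i=1}^k \H^{n-1}(\partial B(o,t) \cap \Omega_i)$ for a.e. $t$, so that $\int_t^\infty \H^{n-1}(\partial B(o,s)\cap \cup_{i\le k}\Omega_i)\,ds = m(t)$; (ii) bound the perimeter of the truncated cluster by $A(\Omega) - (\text{perimeter of } \Sigma \text{ outside } \overline{B(o,t)}) + \H^{n-1}(\partial B(o,t)\cap \cup_{i\le k}\Omega_i)$, using that truncation at a good radius replaces the old interfaces lying outside the ball by at most the spherical slice; (iii) invoke a density-lower-bound argument — exactly as in the standard geometric measure theory proof of boundedness in $\R^n$ (e.g. \cite[Theorem 13.6]{MaggiBook} / \cite[Theorem 30.1]{MaggiBook}) — to show that if $\Sigma$ meets $B(o,t)^c$ then $m(t) \geq c\, (\text{something})^n$ forcing $\H^{n-1}(\partial B(o,t)\cap \cup_{i\le k}\Omega_i) \gtrsim m(t)^{(n-1)/n}$ for a set of $t$ of infinite measure; (iv) restore the volume: reintroduce the lost volume $m(t)$ by adding far-away geodesic balls (legitimate since $V(M) \geq \sum v_i$ and, if needed, $V(M)=\infty$ handles the margin — in the compact borderline case the argument is only needed for volumes bounded away from $V(M)$, where small balls are available), paying extra perimeter at most $C m(t)^{(n-1)/n}$ by Proposition \ref{prop:small} and bounded geometry; (v) combine: the net change in perimeter is $\leq -\H^{n-1}(\partial B(o,t)\cap\cup_{i\le k}\Omega_i) + C m(t)^{(n-1)/n} + o(m(t)^{(n-1)/n})$; minimality forces this to be $\geq 0$ for all good $t$, but integrating the differential inequality $m'(t) \leq -c\, m(t)^{(n-1)/n} + \text{(lower order)}$ shows $m$ reaches $0$ in finite time, contradicting $m(t) > 0$ for all $t$. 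Hence $\Sigma$ is bounded.

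The main obstacle is step (iv), the restoration of the volume constraint on a general Riemannian manifold with bounded geometry, in a way that is both legitimate (the competitor must be an honest $k$-cluster with exactly the prescribed volume vector, including the possibly-infinite last coordinate) and cheap (perimeter cost $\lesssim m(t)^{(n-1)/n}$, so that it is dominated by the gain in step (iii)). On $\R^n$ one simply translates a rescaled copy; here I would instead append small isoperimetric regions (or just geodesic balls) far from $\overline{B(o,t)}$, using co-compactness to find room and Proposition \ref{prop:small} to bound their perimeter; the subtlety is that for very small $m(t)$ the exponent $(n-1)/n < 1$ makes this cost genuinely smaller than the linear-in-$m$ terms only after one has the sharp density estimate, so steps (iii) and (iv) must be balanced carefully. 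A secondary technical point is making the truncation rigorous at "good" radii $t$ where $\partial B(o,t)$ is transverse to all the $\partial^*\Omega_i$ and the slices have finite $\H^{n-1}$-measure, which follows from the coarea formula for a.e. $t$. Everything else is the now-standard density-estimate boundedness argument, which the excerpt already acknowledges "carries over to the Riemannian setting".
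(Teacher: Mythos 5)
Your overall strategy is the one the paper invokes: the proof in the paper is essentially a citation of the classical Almgren truncation/density argument (\cite[Lemma 13.6]{MorganBook5Ed}, \cite[Theorem 29.1]{MaggiBook}), fed by the small-volume inequality $\I(v)\geq c v^{(n-1)/n}$ of Proposition \ref{prop:small}, plus the remark that only one cell has infinite volume. Your steps (i)--(iii) and the final ODE argument $-m'(t)\gtrsim m(t)^{(n-1)/n}$ reconstruct that argument faithfully.

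There is, however, a genuine gap in the one step you flesh out yourself, namely the volume restoration in (iv)--(v). Restoring the lost volume $m(t)$ by appending far-away geodesic balls costs perimeter of order $m(t)^{(n-1)/n}$ --- the \emph{same} order as the isoperimetric gain you are trying to extract. Concretely, minimality of $\Omega$ gives $[\text{perimeter removed}] \leq [\text{slice}] + [\text{restoration cost}]$, while the isoperimetric inequality applied to $\cup_{i\leq k}\Omega_i\setminus B(o,t)$ gives $[\text{perimeter removed}] + [\text{slice}] \geq c\, m(t)^{(n-1)/n}$; combining yields $2\,[\text{slice}] \geq c\, m(t)^{(n-1)/n} - [\text{restoration cost}]$, which is vacuous unless the restoration cost is $o(m^{(n-1)/n})$ or at least has a strictly smaller constant than $c$. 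Since $c$ is only the (non-sharp) constant from Proposition \ref{prop:small} while the perimeter of a geodesic ball of volume $m$ is bounded above by $C m^{(n-1)/n}$ with an unrelated constant $C$, there is no reason to have $C<c$, and in $\R^n$ (where both constants are the sharp one) the inequality degenerates exactly. The standard fix --- and the one used in the references --- is a \emph{volume-fixing variation}: a localized deformation supported in a fixed compact set where the cluster has smooth interfaces, whose perimeter cost is \emph{linear} in the volume adjustment, hence genuinely negligible against $m^{(n-1)/n}$ as $m\to 0$. Two smaller points: Proposition \ref{prop:small} is a \emph{lower} bound on $\I$ and cannot be cited to \emph{upper}-bound the cost of the added balls (that requires a comparison-geometry bound on small geodesic spheres under bounded geometry); and in your step (v) the slice term $\H^{n-1}(\partial B(o,t)\cap\cup_{i\leq k}\Omega_i)$ enters the competitor's perimeter with a \emph{plus} sign (it is new boundary created by truncation), so the displayed ``net change'' inequality is mis-stated --- the negative term should be the perimeter of $\Sigma$ outside $\overline{B(o,t)}$, with the slice appearing on the other side via coarea.
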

\begin{proof}
Boundedness of the finite volume cells of $\Omega$ follows by a classical argument of Almgren (e.g. \cite[Lemma 13.6]{MorganBook5Ed} or
\cite[Theorem 29.1]{MaggiBook}, cf. \cite[Lemma 4.27]{Ritore-IsoperimetricBook} in the case $k=1$), which is based on the validity of a single-bubble isoperimetric inequality for small volumes of the form $\I(v) \geq c v^{\frac{m-1}{m}}$ for all $v \in [0,v_0]$; the latter holds by Proposition \ref{prop:small}. 
Since $\Omega$ has at most one infinite volume cell, it follows that its boundary $\Sigma$ is necessarily bounded. 
\end{proof}

\begin{proposition} \label{prop:infinity}
Let $(M^n,g)$ be a connected non-compact homogeneous Riemannian manifold. Then $V(M) = \infty$ and $\lim_{v \rightarrow \infty} \I(v) = \infty$. 
\end{proposition}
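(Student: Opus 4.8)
The statement has two parts, which I would handle separately.

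\emph{Part 1: $V(M)=\infty$.} This is a packing argument. Since $\Isom(M)$ acts transitively, the volume $v_r:=V(B(x,r))$ of a geodesic $r$-ball is independent of $x\in M$, and $v_r>0$. Pick a maximal $1$-separated subset $\{x_i\}_{i\in I}\subset M$ (which exists by Zorn's lemma): by maximality $\bigcup_{i\in I}B(x_i,1)=M$, while $\{B(x_i,1/2)\}_{i\in I}$ are pairwise disjoint, so $\abs{I}\,v_{1/2}\le V(M)$. If $V(M)<\infty$ this forces $\abs{I}<\infty$, so $M$ is a finite union of bounded balls and hence bounded; being also complete (as every homogeneous manifold is), $M$ would then be compact by Hopf--Rinow, contradicting non-compactness. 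Hence $V(M)=\infty$.

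\emph{Part 2: $\lim_{v\to\infty}\I(v)=\infty$.} I would argue by contradiction and compactness. Suppose there exist $S<\infty$ and $v_j\to\infty$ with $\I(v_j)\le S$. A homogeneous manifold is co-compact and of bounded geometry, so Propositions \ref{prop:exist} and \ref{prop:bounded} produce bounded single-bubble isoperimetric minimizers $E_j$ with $V(E_j)=v_j$, $A(E_j)=\I(v_j)\le S$ and $\partial E_j\neq\emptyset$; by homogeneity, apply an isometry to each $E_j$ so that a fixed basepoint $o$ lies on $\partial E_j$ for all $j$. An isoperimetric minimizer of volume $v$ is a perimeter almost-minimizer whose almost-minimality constant is controlled by the local Lipschitz constant of $\I$ near $v$; since the isoperimetric profile of a homogeneous manifold is concave (translate a minimizer of volume $v$ along a continuous path of isometries to obtain $\I(v+s)+\I(v-s)\le 2\I(v)$, then invoke the continuity of $\I$), and since we are assuming $\I$ bounded, this constant is \emph{uniform} for $v\ge 1$. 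Combining this with the uniform density estimates that accompany almost-minimality and with the local $BV$-compactness furnished by bounded geometry, a subsequence of $\{E_j\}$ converges in $L^1_{loc}$, with local Hausdorff convergence of the boundaries, to a set $E_\infty$ of locally finite perimeter whose boundary $\Sigma_\infty:=\partial E_\infty$ is area-minimizing, hence smooth away from a closed singular set of Hausdorff dimension at most $n-8$. The uniform density lower bound $\H^{n-1}(\Sigma_\infty\cap B(x,\rho_0))\ge c_0\rho_0^{n-1}$ for all $x\in\Sigma_\infty$ forces $o\in\Sigma_\infty$, so $\Sigma_\infty\neq\emptyset$; lower semicontinuity of perimeter gives $\H^{n-1}(\Sigma_\infty)\le\liminf_j A(E_j)\le S<\infty$; and the same density bound rules out $\Sigma_\infty$ being noncompact (else it would be unbounded, and an infinite $2\rho_0$-separated subset of it would make $\H^{n-1}(\Sigma_\infty)$ infinite). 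Thus $M$ would contain a nonempty \emph{compact} minimal hypersurface.

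\emph{The main obstacle} is to derive a contradiction from the existence of such a compact minimal hypersurface, and this is precisely where more than non-compactness is needed: $M=N^{n-1}\times\R$ with $N^{n-1}$ compact contains the compact minimal hypersurface $N\times\{0\}$ and, as recalled in Remark \ref{rem:intro-examples}, has isoperimetric profile bounded by $2\H^{n-1}(N)$, so this step must appeal to the one-end hypothesis in force where the proposition is used. Concretely, I would invoke the structure theory of homogeneous spaces: a homogeneous manifold carrying a compact minimal hypersurface (equivalently, by transitivity, one through every point) splits isometrically as a compact manifold times $\R$ and is therefore two-ended, contradicting one-endedness. I expect this structural rigidity step, together with pinning down the exact minimal-surface input (uniform almost-minimality, uniform density and monotonicity) required for the compactness argument of Part 2, to be the delicate points.
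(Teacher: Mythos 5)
The paper's proof of this proposition is a one-line citation: Pittet's theorem identifies the asymptotic growth rate of $\I(v)$ as $v \rightarrow \infty$ on a homogeneous manifold (one of $v^{(d-1)/d}$, $v/\log v$ or $v$, according to the volume growth and unimodularity of the transitive group), and the assertions are read off from it. Your Part 1 --- the packing plus Hopf--Rinow argument for $V(M)=\infty$ --- is correct, self-contained, and a perfectly good replacement for the citation on that point. Your concavity step in Part 2 is also sound once expanded (it is the standard submodularity argument applied to $E \cup \varphi_t(E)$ and $E \cap \varphi_t(E)$, using continuity of $t \mapsto V(E \cap \varphi_t(E))$).

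Part 2 nevertheless has a genuine gap, and it is exactly the one you flag yourself. Your compactness scheme reduces boundedness of $\I$ to the existence of a non-empty compact area-minimizing hypersurface in $M$, which you then need to exclude; but under the stated hypotheses (connected, non-compact, homogeneous --- with no assumption on the number of ends) it cannot be excluded, because the second assertion is not provable as written: for $M = N^{n-1} \times \R$ with $N$ compact homogeneous, the slabs $N \times [a,b]$ give $\I(v) \leq 2\H^{n-1}(N)$ for all $v$, so $\lim_{v \rightarrow \infty} \I(v) < \infty$. This is precisely the linear-volume-growth case $d=1$ of Pittet's trichotomy, in which $\I(v) \asymp v^{(d-1)/d} = 1$ is bounded; your own example is thus a counterexample to the literal statement, and the conclusion that should be extracted from Pittet (and the only one needed in Remark \ref{rem:finite-CCs}, namely assumption (\ref{it:ass2}) of Proposition \ref{prop:finite-CCs}) is $\liminf_{v \rightarrow \infty} \I(v) > 0$, which does hold in all three regimes. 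Importing the one-end hypothesis, as you propose, changes the statement and is not available where the proposition is invoked, since Remark \ref{rem:finite-CCs} applies it to arbitrary connected homogeneous manifolds. Finally, even granting a one-end hypothesis, your closing claim that a homogeneous manifold carrying a compact minimal hypersurface splits isometrically as a compact manifold times $\R$ is asserted rather than proved; absent curvature assumptions this is not an off-the-shelf splitting theorem, and it is exactly the hard point your argument leaves open.
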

\begin{proof}
This follows from a much more precise result of Pittet \cite[Theorem 2.1]{Pittet-ProfileOfHomogeneousManifolds}, who identified 3 possible asymptotic growth rates of $\I(v)$ as $v \rightarrow \infty$ (depending on algebraic properties of $\Isom_0(M)$). 
\end{proof}

\subsection{Connected components of minimizers}

\begin{prop} \label{prop:finite-CCs}
Let $(M,g)$ be a smooth complete Riemannian manifold. 
Assume that:
\begin{enumerate}
\item \label{it:ass1} $\lim_{v \rightarrow 0^+} \frac{\I(v)}{v} = +\infty$; and
\item \label{it:ass2} For all $\eps > 0$, $\inf_{v \in (\eps , V(M)-\eps)} \I(v) > 0$. 
\end{enumerate}
Then for any isoperimetric minimizing $k$-cluster $\Omega$ on $(M,g)$ with boundary $\Sigma$, $M \setminus \Sigma$ has finitely many bounded connected components.
\end{prop}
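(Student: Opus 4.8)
The plan is to argue by contradiction: suppose $M \setminus \Sigma$ has infinitely many bounded connected components. Since $\Omega$ is an isoperimetric minimizing cluster, Proposition \ref{prop:bounded} (or rather the boundedness hypothesis is not yet available here, so I would instead appeal directly to the finiteness of $A(\Omega)$) tells us that $A(\Omega) = \H^{n-1}(\Sigma) < \infty$. Each bounded connected component $U$ of $M \setminus \Sigma$ is, after the modifications of Lemma \ref{lem:good-cells}(\ref{it:good2}), an open set with $U = \interior \overline U$, and its topological boundary $\partial U \subseteq \Sigma$. The key point is a lower bound on the ``isoperimetric cost'' of each such component: I want to show that each bounded component $U$ contributes a definite amount to $\H^{n-1}(\Sigma)$, so that only finitely many can exist.

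First I would establish the following dichotomy for a bounded component $U$ of volume $v := V(U) \in (0,\infty)$ (positivity because $U$ is open and nonempty; finiteness because $U$ is bounded and, being a component of $M\setminus\Sigma$, is contained in a finite-perimeter cell — or one may note $V(U) \le V(M)$ and $U$ bounded). Case 1: $v$ is ``small'', say $v \le v_0$ for a threshold to be chosen. Then by hypothesis (\ref{it:ass1}), $\I(v)/v \to \infty$ as $v \to 0^+$, so $\I$ is eventually superlinear; combined with hypothesis (\ref{it:ass2}) giving a positive lower bound away from $0$, we get that $\inf\{\I(v) : 0 < v \le V(M)/2\}$... no — more carefully, I would use that there is $c_0 > 0$ and $v_1 > 0$ with $\I(v) \ge c_0$ for all $v \in [v_1, V(M)-v_1]$, and $\I(v) \ge v$ (say) for $v \le v_1$ once we are far enough out on the superlinear regime; either way, $A(U) \ge \I(v) \ge \min(c_0, \text{something})$. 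The cleanest route: since $U$ is itself a (single-bubble) competitor, $\H^{n-1}(\partial^* U) = A(U) \ge \I(v)$, and I claim $\inf_{v \in (0, V(M))} \I(v) > 0$ fails in general only near $v=0$ and near $v=V(M)$; near $v=0$ it is rescued by (\ref{it:ass1}) (superlinearity forces $\I(v) \ge \delta$ for $v$ bounded below by any fixed positive number, but for $v \to 0$ we instead use that $U$ being tiny still costs at least $\I(v)$, and $\I(v) > 0$ for each such $v$ by (\ref{it:ass2}) applied with $\eps = v$... this needs the quantitative version). So the honest statement is: there exists $\eta > 0$ such that every bounded component $U$ with $V(U) \le V(M) - \eta$ (if $V(M) < \infty$; no constraint if $V(M) = \infty$, where no $U$ can be that big anyway since it's bounded) satisfies $\H^{n-1}(\partial U) \ge \eta'$ for some $\eta' = \eta'(\eta) > 0$ — this is where I expect the main obstacle, and it is resolved by splitting into $v \le v_0$ (use superlinearity of $\I$ to get a uniform lower bound $\I(v) \ge \I(v_0) \cdot (v/v_0)$? no, wrong direction) — rather, for $v$ small we use that the \emph{relative} isoperimetric estimate inside a small ball (as in Lemma \ref{lem:positive}) forces a component touching the boundary to have perimeter bounded below, but a component entirely ``inside'' is impossible since $M\setminus\Sigma$ components abut $\Sigma$. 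Let me restructure.

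Here is the cleaner argument I would actually carry out. For each bounded component $U$ of $M \setminus \Sigma$, we have $\partial U \ne \emptyset$ (since $M$ is connected and $U \ne M$), so pick $x \in \partial U \subseteq \Sigma$ and $r > 0$ less than the injectivity radius at $x$; then $\theta(U, x, r) \in (0,1)$ by Lemma \ref{lem:good-cells}, and the relative isoperimetric inequality in $B(x,r)$ gives $\H^{n-1}(\partial^* U \cap B(x,r)) \ge c(x,r)\min(V(U \cap B(x,r)), V(B(x,r) \setminus U))^{(n-1)/n} > 0$. This is not yet uniform in $U$. To make it uniform I instead note: if $M \setminus \Sigma$ had infinitely many bounded components $U_1, U_2, \ldots$, then since $\H^{n-1}(\Sigma) < \infty$ and $\sum_j \H^{n-1}(\partial^* U_j) \le 2\H^{n-1}(\Sigma)$ (each piece of $\Sigma$ borders at most two... actually a point of $\Sigma$ may border several $U_j$, but $\partial^*U_j$ are essentially disjoint up to $\H^{n-1}$-null sets by the density-$1/2$ argument in Lemma \ref{lem:good-cells}), we must have $\H^{n-1}(\partial^* U_j) \to 0$, hence also $V(U_j) \to 0$ (by the single-bubble inequality $A(U_j) \ge \I(V(U_j))$ together with hypothesis (\ref{it:ass2}): if $V(U_j) \ge \eps$ along a subsequence, then $V(U_j) \le V(M) - \eps'$ for suitable $\eps'$ — here if $V(M) = \infty$ there is nothing to check, and if $V(M) < \infty$ we need to also rule out $V(U_j) \to V(M)$, which is impossible for infinitely many disjoint sets — so $\I(V(U_j)) \ge \inf_{[\eps, V(M)-\eps']}\I > 0$, contradicting $A(U_j) \to 0$). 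So $V(U_j) \to 0$, and then hypothesis (\ref{it:ass1}) gives $A(U_j) \ge \I(V(U_j)) \ge \frac{\I(V(U_j))}{V(U_j)} V(U_j)$, which does \emph{not} immediately contradict $A(U_j)\to 0$ — but now I replace the tiny cells by a single ball. Concretely, modify $\Omega$ by deleting all but finitely many of the $U_j$: that is, reassign $U_{N+1}, U_{N+2}, \ldots$ to neighboring cells. Comparing areas via the superlinearity of $\I$ near $0$ as in the standard Almgren volume-fixing argument shows this strictly decreases $A(\Omega)$ while a compensating single small bubble restores the volume vector at negligible cost — contradicting minimality. The main obstacle is organizing this volume-adjustment bookkeeping cleanly (ensuring the adjustment cluster is admissible and that the cost estimate closes), and I would handle it exactly as in \cite[Lemma 13.6]{MorganBook5Ed} / \cite[Theorem 29.1]{MaggiBook}, which is precisely the mechanism invoked in the proof of Proposition \ref{prop:bounded}.

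Finally, the same argument applied to the collection of \emph{all} bounded components — finitely many — combined with the observation that unbounded components number at most the number of ends of $M$ (used elsewhere), completes the proof; but for this proposition only the finiteness of the bounded ones is asserted, so the contradiction above suffices.
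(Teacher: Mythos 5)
Your route is essentially the paper's: show that the tail of bounded components has total area tending to $0$, deduce from assumption (2) that their volumes tend to $0$, then use assumption (1) to argue that deleting the tail saves more area than restoring the volume vector costs, closing with the standard Almgren volume-fixing machinery. The paper's sketch does exactly this (with tails $\Lambda_j = \cup_{i\ge j}\Theta_i$ satisfying $A(\Lambda_j) \ge C\,V(\Lambda_j)$ for arbitrarily large $C$ and $V(\Lambda_j)$ arbitrarily small), deferring the bookkeeping to the same references you cite.

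One step of your write-up is wrong as literally stated and is worth flagging because it sits exactly where hypothesis (1) enters: you propose to restore the volume vector with ``a compensating single small bubble \ldots at negligible cost.'' A ball of volume $v$ costs of order $v^{(n-1)/n}$, and since $\I(v) \lesssim v^{(n-1)/n}$ as well, this cost is comparable to (and may exceed) the area $A(\Lambda_j) \ge \I(V(\Lambda_j))$ that the deletion saves; the comparison does not close. Moreover a single bubble can adjust only one cell's volume, whereas reassigning the deleted components perturbs several coordinates of $V(\Omega)$. The correct mechanism --- and the one actually used in the references you invoke --- is the volume-fixing variation supported near regular interface points, whose area cost is \emph{Lipschitz} (linear) in the volume change, say at most $L\,V(\Lambda_j)$. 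It is precisely against this linear cost that assumption (1) wins: choosing $C > (k+1)L$ in $A(\Lambda_j) \ge C\,V(\Lambda_j)$ yields a strict decrease of $A$, contradicting minimality. With that substitution your argument matches the paper's.
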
 
For $k=2$, this was verified by Hutchings \cite[Corollary 4.3]{Hutchings-StructureOfDoubleBubbles} on $\R^n$ and extended to all model spaces in \cite[Proposition 4.11]{CottonFreeman-DoubleBubbleInSandH}. This was extended on $\R^n$ and $\S^n$ to general $k \geq 2$ in \cite[Lemma 9.4]{EMilmanNeeman-TripleAndQuadruple} by adapting an argument of Morgan \cite[Theorem 2.3, Step 2]{MorganSoapBubblesInR2}, but the proof only uses assumptions (\ref{it:ass1}) and (\ref{it:ass2}). For completeness, let us sketch the argument. 
\begin{proof}[Sketch of Proof]
Assume in the contrapositive that the open $M \setminus \Sigma$ had an infinite (obviously countable) number of bounded connected components $\{\Theta_i\}_{i=1,\ldots,\infty}$. Denote $\Lambda_j  :=\cup_{i=j}^\infty \Theta_i$. Since the cluster's total perimeter is finite, $A(\Lambda_1) < \infty$ and hence $A(\Lambda_j) = \sum_{i=j}^\infty A(\Theta_i) \rightarrow 0$ as $j \rightarrow \infty$. In particular, $\I(V(\Theta_i)) \leq A(\Theta_i) \rightarrow 0$ as $i \rightarrow \infty$, and hence assumption (\ref{it:ass2}) implies that $V(\Theta_i) \rightarrow 0$ (note that this holds regardless of whether $V(M) =\infty$ or not). This means that for any arbitrarily large $C \geq 1$, the exists $J$ so that for all $i \geq J$, $A(\Theta_i) \geq C V(\Theta_i)$ by assumption (\ref{it:ass1}). Consequently, $A(\Lambda_j) \geq C V(\Lambda_j)$ for all $j \geq J$, and in particular $V(\Lambda_j) \leq A(\Lambda_j) \rightarrow 0$ as $j \rightarrow \infty$. 
It follows that for any $C,J \geq 1$ and $\eps > 0$, there exists $j \geq J$ so that $A(\Lambda_j) \geq C V(\Lambda_j)$ and $V(\Lambda_j) \leq \eps$. From here on the proof proceeds exactly as in \cite[Lemma 9.4]{EMilmanNeeman-TripleAndQuadruple}. 
\end{proof}

\begin{rem} \label{rem:finite-CCs}
Note that Lemma \ref{lem:positive} and Propositions \ref{prop:small} and \ref{prop:infinity} imply that assumptions (\ref{it:ass1}) and (\ref{it:ass2}) are satisfied for any connected homogeneous Riemannian manifold. 
\end{rem}

\section{Geometric Measure Theory -- proof of Theorem \ref{thm:intro-main}} \label{sec:GMT}

We will use the following facts from Geometric Measure Theory. For background 
on varifolds, we refer to \cite{Simon-IntroToGMT}. All of our varifolds will be multiplicity one rectifiable $(n-1)$-varifolds $Z$, inducing a locally finite mass measure $\norm{Z} = \H^{n-1} \llcorner \Sigma_Z$, where $\Sigma_Z$ is a countably $(n-1)$-rectifiable $\H^{n-1}$-measurable subset of the ambient smooth Riemannian manifold $(M^n,g)$; we will write $Z = \abs{\Sigma_Z}$. 
For example, $Z = \abs{\partial^* \Omega}$ is such a varifold for any Borel set $\Omega \subset M$ with locally finite perimeter \cite[Chapter 3, Theorem 4.4]{Simon-IntroToGMT}, and more generally, $Z = \abs{\cup_{i=1}^{q} \partial^* \Omega_i}$ for any $q$-partition $\Omega$ in $M$. 
When $M^n$ and $N^n$ are Euclidean spaces and $T : M^n \rightarrow N^n$ is an invertible linear map, we denote $T_* Z = |T \Sigma_Z|$. 
In our setting, it will also always hold that $Z$ has locally bounded generalized mean-curvature and no boundary in an open $U \subset M$, in the sense that $\abs{\int \text{div}_{\Sigma_Z} X d\norm{Z} }\leq C_W \sup_{p \in W} |X(p)|$ for every $C^1$ vector field $X$ compactly supported in $W \Subset U$; when $C_W = 0$ for all $W \Subset U$, $Z$ is called stationary (in $U$). 
Consequently  \cite[Chapter 4, Remark 4.9]{Simon-IntroToGMT}, when $U = M$, we may assume that $\Sigma_Z = \supp \norm{Z}$, the support of the mass measure $\norm{Z}$. 
For example, if $\Omega$ is a minimizing $k$-cluster in $M^n$, whose cells have been modified on null-sets as in Lemma \ref{lem:good-cells}, then the varifold $Z = \sabs{\cup_{i=1}^{k+1} \partial^* \Omega_i} = \sabs{\cup_{i=1}^{k+1} \partial \Omega_i}$ satisfies all of the above properties \cite{EMilmanNeeman-GaussianMultiBubble,EMilmanNeeman-TripleAndQuadruple};  in addition, $\supp \norm{Z} = \cup_{i=1}^{k+1} \partial \Omega_i$ is $(\M,\eps,\delta)$ minimizing in the sense of Almgren \cite{AlmgrenMemoirs,MorganBook5Ed,CES-RegularityOfMinimalSurfacesNearCones}.
  
We summarize what we require below in the following theorem. We denote by $\inj_x M$ the injectivity radius of $(M,g)$ at $x \in M$, by $\exp_x : T_x M \rightarrow M$ the exponential map, and by $B^{n-1}(r)$ the centered Euclidean ball of radius $r > 0$ in $\R^{n-1}$. Convergence of measures in $w^*$ means in duality with compactly supported continuous functions. 

\begin{thm} \label{thm:GMT}
Let $\Omega$ be a minimizing $q$-partition in $(M^n,g)$, let $\Sigma = \cup_{i=1}^q \overline{\partial^* \Omega_i}$ and $x \in \Sigma$. Then for any sequence $\lambda_j \searrow 0$, there exists a subsequence $\{j_k\}$ and a $q$-partition $\Omega^x$ of $T_x M$, called a ``blow-up" of $\Omega$ at $x$, such that:
\begin{enumerate}
\item \label{it:set-blowup}
 Denoting $\Omega_i^{x,\lambda} := \{ v \in T_x M : |v| < \inj_x M ~,~ \exp_x(\lambda v) \in \Omega_i \}$, we have that $\H^n \llcorner \Omega^{x,\lambda_{j_k}}_i \rightarrow \H^n \llcorner \Omega^{x}_i$ in $w^*$ as $k \rightarrow \infty$, for all $i=1,\ldots,q$. 
\item \label{it:varifold-blowup}
As multiplicity one rectifiable $(n-1)$-varifolds in $T_x M$, we have the following convergence as $k \rightarrow \infty$:
\[
 Z^{x,\lambda_{j_k}} := \sabs{\cup_{i=1}^q \partial^* \Omega_i^{x,\lambda_{j_k}}} \rightarrow \sabs{\cup_{i=1}^q \partial^* \Omega_i^{x}}  =: Z^x .
\]
In particular, $\snorm{Z^{x,\lambda_{j_k}}} \rightarrow \norm{Z^x}$ in $w^*$. 
\item The (possibly empty) cells $\Omega^x_i$ are open, $\partial \Omega^x_i = \overline{\partial^* \Omega^x_i}$, $\H^{n-1}(\partial \Omega^x_i  \setminus \partial^* \Omega^x_i) = 0$ and $\supp \norm{Z^x} = \cup_{i=1}^q \partial \Omega^x_i \neq \emptyset$. 
\item \label{it:cones}
$\Omega^x_i$ and $Z^x$ are cones: for all $\lambda > 0$, $\lambda \Omega^x_i = \Omega^x_i$ for all $i=1,\ldots,q$ and $\lambda_* Z^x = Z^x$. 
\item \label{it:mul1}
$Z^x$ is stationary, and moreover $\Sigma^x := \supp \norm{Z^x} \subset T_x M$ is $(\M,0,\infty)$ minimizing in the sense of Almgren.  
\item \label{it:density}
 The density $\Theta(\Sigma,x) := \lim_{r \rightarrow 0^+} \frac{\H^{n-1}(\Sigma \cap B(x,r))}{\H^{n-1}(B^{n-1}(r))}$ exists, is finite, and coincides with $\Theta(\Sigma^x,0)$. 
 \item \label{it:CES}
$\Sigma^x$ is the disjoint union of sets $\Sigma^{x,1}$, $\Sigma^{x,2}$, $\Sigma^{x,\geq 3}$, satisfying:
\begin{enumerate}
\item $\Sigma^{x,1}$ is a locally-finite union of embedded $(n-1)$-dimensional $C^{\infty}$ manifolds, and for every $v \in \Sigma^{x,1}$, $\Sigma^x$ around $p$ is locally $C^\infty$ diffeomorphic to $\{0\} \times \R^{n-1}$.
\item $\Sigma^{x,2}$ is a locally-finite union of embedded $(n-2)$-dimensional $C^{\infty}$ manifolds, and for every $v \in \Sigma^{x,2}$, $\Sigma^x$ around $p$ is locally $C^\infty$ diffeomorphic to $\Y \times \R^{n-2}$.
\item $\Sigma^{x,\geq 3}$ is closed and has locally finite $\H^{n-3}$ measure. 
\end{enumerate}
Here $\Y$ denotes the planar cone consisting of $3$ half-lines meeting at the origin in $120^\circ$ angles. 
\end{enumerate}
\end{thm}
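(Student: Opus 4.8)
The plan is to assemble this from standard compactness and regularity results for minimizing clusters / $(\M,\eps,\delta)$-minimizers, transplanted to a normal coordinate chart at $x$. First I would set up the rescalings: choose geodesic normal coordinates via $\exp_x$, identify a small ball in $(M,g)$ with a ball in $T_x M \cong \R^n$ equipped with a metric $g_\lambda(v) := \lambda^{-2}(\exp_x)^* g(\lambda v)$, and note that as $\lambda \searrow 0$ the metrics $g_\lambda \to g_{\mathrm{eucl}}$ in $C^\infty_{loc}$. The dilated partition $\Omega^{x,\lambda}$ is then an $(\M,\eps,\delta)$-minimizer (equivalently, an almost-minimizing cluster) with respect to $g_\lambda$, with $\eps \to 0$ and $\delta \to \infty$ as $\lambda \to 0$ by the scaling of the perturbation term; uniform density bounds (upper and lower) for almost-minimizing clusters give uniform local perimeter bounds, so $\H^n \llcorner \Omega^{x,\lambda}_i$ and the varifolds $Z^{x,\lambda}$ are precompact. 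Passing to a subsequence along any given $\lambda_j \searrow 0$ yields limit cells $\Omega^x_i$ and a limit varifold $Z^x$; this handles (1) and (2). Lower-semicontinuity of perimeter together with the almost-minimality passing to the limit (the classical argument: competitors for the limit can be pulled back to competitors for $\Omega^{x,\lambda_j}$ up to a vanishing error) shows $Z^x = |\cup \partial^* \Omega^x_i|$ is a genuine Euclidean minimizing cluster, i.e.\ $(\M,0,\infty)$-minimizing and hence stationary — this is (5). Then (3) follows by applying Lemma \ref{lem:good-cells} (and the remark after it) to the limit partition $\Omega^x$ in $\R^n$, after the null-set normalization, together with the density estimates which force $\supp\norm{Z^x}=\cup\partial\Omega^x_i$.

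For (4), the cone property, I would invoke monotonicity of the mass ratio $r \mapsto \H^{n-1}(\Sigma \cap B(x,r))/r^{n-1}$ (valid up to a multiplicative $1+O(r)$ correction for almost-minimizers, exactly as in the stationary varifold case), which gives existence of the density $\Theta(\Sigma,x)$; rescaling the monotonicity formula shows the blow-up has constant mass ratio, and the equality case of monotonicity for stationary integral varifolds forces $Z^x$ to be dilation-invariant, and correspondingly the cells $\Omega^x_i$ are cones. That same monotonicity, plus upper semicontinuity of density under varifold convergence and the fact that for the flat limit the density at $0$ is computed from the cone, yields (6): $\Theta(\Sigma,x) = \Theta(\Sigma^x,0)$, and finiteness is immediate from the density upper bound. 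Finally (7) is precisely the structure/regularity theorem for $(\M,0,\infty)$-minimizing cones in $\R^n$ of codimension one — I would cite the Almgren regularity theory together with the refinement for minimal cluster-type singularities \cite{CES-RegularityOfMinimalSurfacesNearCones} (or the corresponding statements in \cite{MaggiBook,MorganBook5Ed}): away from a closed set $\Sigma^{x,\geq 3}$ of locally finite $\H^{n-3}$ measure, a codimension-one minimizing interface is either a smooth hypersurface ($\Sigma^{x,1}$) or locally diffeomorphic to the $\Y$-cone times $\R^{n-2}$ ($\Sigma^{x,2}$).

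The main obstacle I anticipate is verifying carefully that the rescaled clusters remain uniformly almost-minimizing in the varying metrics $g_\lambda$ with the right quantitative decay of $(\eps,\delta)$ — i.e.\ that an $(\M,\eps_0,\delta_0)$-minimizer in $(M,g)$ rescales to an $(\M,\eps(\lambda),\delta(\lambda))$-minimizer in $(\R^n,g_\lambda)$ with $\eps(\lambda)\to 0$, $\delta(\lambda)\to\infty$, and then that this property is stable under the $C^\infty_{loc}$ convergence $g_\lambda\to g_{\mathrm{eucl}}$ so that the limit is genuinely $(\M,0,\infty)$-minimizing. Once that is in place, everything else is an application of the cited monotonicity, compactness, and regularity theorems. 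I would also need the elementary but slightly fiddly point that the null-set modifications of Lemma \ref{lem:good-cells} commute with the blow-up in the sense that $\supp\norm{Z^x}=\cup_i\partial\Omega^x_i$, which follows from the uniform lower density bound.
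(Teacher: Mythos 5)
Your outline follows essentially the same route as the paper: compactness from the (weighted) monotonicity formula, identification of the limit as a stationary minimizing cone, and the regularity/stratification theorem of Colombo--Edelen--Spolaor for part (7). There is, however, one step you gloss over that is a genuine gap as written: the identification of the varifold limit in part (2) with the \emph{multiplicity one} varifold $\sabs{\cup_i \partial^* \Omega^x_i}$ of the limit partition. Allard's compactness theorem only produces \emph{some} stationary integral varifold $Z^x$; a priori its support could be strictly larger than $\cup_i \partial \Omega^x_i$ (hidden boundary where two sheets collapse onto each other) or it could carry multiplicity $\geq 2$ there. Lower semicontinuity of perimeter and the uniform lower density bound, which you invoke for this, give only one inclusion of supports and do not rule out higher multiplicity. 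The paper handles this with the sheeting lemma of Colombo--Edelen--Spolaor (\cite[Lemma 9.2]{CES-RegularityOfMinimalSurfacesNearCones}), which uses the uniform $(\M,\eps,\delta)$-minimality of the approximating supports to conclude both that $Z^x$ has multiplicity one and that $\supp\norm{Z^x}$ is $(\M,0,\infty)$-minimizing; the reverse inclusion $\supp\norm{Z^x} \supseteq \cup_i \partial^* \Omega^x_i$ then comes from the constancy-type theorem \cite[Chapter 4, Theorem 7.5]{Simon-IntroToGMT} via $\partial^* \Omega^x_i \subseteq (\Omega^x_i)^{(1/2)}$. This multiplicity-one conclusion is not cosmetic --- it is used at the end of the proof of Theorem \ref{thm:intro-main} to derive the contradiction. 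Two smaller points: showing $\supp\norm{Z^x}\neq\emptyset$ at the level of the \emph{cells} (at least two nonempty limit cells) uses the Infiltration Lemma, not just the density lower bound for the varifold; and Lemma \ref{lem:good-cells}(\ref{it:good3}) is stated for isoperimetric minimizers, so for the limit partition you should instead derive $\H^{n-1}(\partial\Omega^x_i\setminus\partial^*\Omega^x_i)=0$ from the $(\M,0,\infty)$-minimality, as the paper does.
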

\begin{proof}
The statement is classical for $q=2$ (in which case $\Sigma^{x,2} = \emptyset$) -- see for instance \cite[Theorem 28.6]{MaggiBook}. 
For general $q \geq 2$, we argue as follows. 
The subsequential existence of a blow-up of each $\Omega^x_i$ is a consequence of a well-known compactness argument, 
based on the (weighted) monotonicity formula for (almost) perimeter minimizers, which continues to hold also for (locally) minimizing $q$-partitions, because these have (locally) constant generalized mean curvature and no boundary \cite[Lemma 2.11]{EMilmanNeeman-TripleAndQuadruple}, and so (weighted) monotonicity for their associated boundary varifolds applies \cite[Chapter 4, Theorem 3.20 and Remark 3.21]{Simon-IntroToGMT}. Consequently, the perimeters of $\Omega^{x,\lambda}_{i}$ are locally uniformly bounded (for all $i=1,\ldots,q$ and $\lambda > 0$), and we may sequentially pass to appropriate subsequences which ensure the $w^*$ convergence $\H^n \llcorner \Omega^{x,\lambda_{j_k}}_i \rightarrow \H^n \llcorner \Omega^{x}_i$  for some set $\Omega^x_i$ of locally finite perimeter in $T_x M$ \cite[Corollary 12.27]{MaggiBook}. Clearly $\H^n(\Omega^x_i \cap \Omega^x_j) = 0$ for all $i \neq j$, and $\sum_{i=1}^q \theta(\Omega^x_i,0,r) = 1$ for all $r > 0$, 
and so modifying $\{\Omega^x_i\}$ by null-sets as in Lemma \ref{lem:good-cells}, we may ensure that $\{\Omega^x_i\}$ are the open cells of a partition, with $\overline{\partial^* \Omega^x_i} = \partial \Omega^x_i$ 
for all $i=1,\ldots,q$. 
At least 2 cells among $\{\Omega^x_i\}$ are non-empty, since otherwise we not have $x \in \Sigma$ by the Infiltration Lemma \cite[Lemma 30.2]{MaggiBook}. 
Therefore $\cup_{i=1}^q \partial \Omega^x_i$ is non-empty. 

Next, Allard's compactness theorem \cite[Chapter 8, Theorem 5.8 and Remark 5.9, Chapter 4 Remark 5.2]{Simon-IntroToGMT} ensures that the multiplicity one rectifiable $(n-1)$-varifolds $Z^{x,\lambda_{j_k}} :=\sabs{\cup_{i=1}^q \partial^* \Omega_i^{x,\lambda_{j_k}}}$ subsequentially converge in $w^*$ to some stationary integral rectifiable $(n-1)$-varifold $Z^x$ in $T_x M$ satisfying $\lambda_* Z^x = Z^x$ for all $\lambda > 0$, and by relabeling, we continue to denote this subsequence by $\{j_k\}$. Since for every bounded open set $U \subset T_x M$, $Z^{x,\lambda_{j_k}}$ have uniformly bounded generalized mean-curvature and mass in $U$ and $\supp \snorm{Z^{x,\lambda_{j_k}}}$ are uniformly $(\M,\eps,\delta)$ minimizing there, a sheeting lemma \cite[Lemma 9.2]{CES-RegularityOfMinimalSurfacesNearCones} implies that $Z^x$ must have multiplicity one and that $\supp \norm{Z^x}$ is $(\M,0,\infty)$ minimizing.

Since $\Omega_i^x$ are open, we obviously must have $\supp \norm{Z^x} \subset \cup_{i=1}^q \partial \Omega^x_i$. On the other hand, since $\partial^* \Omega^x_i \subset (\Omega^x_i)^{(1/2)}$ by (\ref{eq:half}), \cite[Chapter 4, Theorem 7.5]{Simon-IntroToGMT} implies that $\supp \norm{Z^x} \supset \cup_{i=1}^q \partial^* \Omega^x_i$. As the support is closed, it follows that $\supp \norm{Z^x} = \cup_{i=1}^q \partial \Omega^x_i$. Since $\supp \norm{Z^x}$ is a cone, this implies that all $\Omega^x_i$'s must be cones as well. Since $\supp \norm{Z^x}$ is $(\M,0,\infty)$ minimizing, we confirm that $\H^{n-1}(\partial \Omega^x_i \setminus \partial^* \Omega^x_i) = 0$ for all $i=1,\ldots,q$ \cite[Section 30.2]{MaggiBook}. Finally, since $Z^x$ is multiplicity one, it follows that $Z^x = |\cup_{i=1}^q \partial^* \Omega^x_i| = \sabs{\cup_{i=1}^q \partial \Omega^x_i}$. This verifies parts (\ref{it:set-blowup}) through (\ref{it:mul1}). 

Part (\ref{it:density}) follows from \cite[Chapter 4, Remark 3.21, and Chapter 8 (5.2)]{Simon-IntroToGMT}. 
Part (\ref{it:CES}) follows by \cite[Theorem 3.10]{CES-RegularityOfMinimalSurfacesNearCones}; 
as explained in the proof, the stated decomposition of $\supp \norm{Z^x}$ actually follows from the regularity results of Simon \cite{Simon-Sigma2}. 
In our setting, $Z^x$ has an associated cycle structure (being the boundary of a partition), which allows a further decomposition of $\Sigma^{x,\geq 3}$ -- see Remark \ref{rem:CES} below. 
\end{proof}
 
We will crucially also need the following strong maximum principle due to Wickramasekera \cite[Theorem 19.1]{Wickra-SMP}
\begin{thm}[Wickramasekera] \label{thm:SMP}
If $Z_1,Z_2$ are two stationary integral $(n-1)$-varifolds on a smooth complete Riemannian manifold $(M^n,g)$ such that $\H^{n-2}(\supp \norm{Z_1} \cap \supp \norm{Z_2}) = 0$ then $\supp \norm{Z_1} \cap \supp \norm{Z_2} = \emptyset$. 
\end{thm}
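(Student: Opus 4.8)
The plan is to reduce the statement to the already-established strong maximum principle for stationary integral varifolds of codimension one, as formulated in Wickramasekera's work, which is precisely the content we wish to quote. Since this is a citation of an external deep theorem rather than something to be reproved here, the ``proof'' I would write is essentially a pointer: the statement as phrased is a direct specialization of \cite[Theorem 19.1]{Wickra-SMP}, where the hypothesis $\H^{n-2}(\supp\norm{Z_1} \cap \supp\norm{Z_2}) = 0$ plays the role of the ``touching set is small'' condition, and the conclusion is that the two supports are in fact disjoint. First I would recall that each $\supp\norm{Z_i}$ is a closed set carrying a stationary integral $(n-1)$-varifold, hence around $\H^{n-1}$-a.e.\ point it is a smooth minimal hypersurface by Allard's regularity theorem, and the singular set has Hausdorff dimension at most $n-2$ (in fact $n-7$ for the genuinely singular part, but only the codimension-$2$ bound is needed here).

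The key step is the local dichotomy: near any point $p \in \supp\norm{Z_1} \cap \supp\norm{Z_2}$, either the two varifolds agree on a neighborhood of $p$ (as varifolds, by unique continuation for the minimal surface system, after passing through the regular parts), or they can be locally separated. Wickramasekera's theorem upgrades the classical Ilmanen/solomon--White-type maximum principle so that it tolerates a touching set of vanishing $\H^{n-2}$ measure rather than requiring the varifolds to touch only at regular points. The mechanism is that a touching set that is $\H^{n-2}$-null cannot ``pin'' the two sheets together along a singular stratum, so the usual sliding/barrier argument on the regular parts, combined with the constancy theorem on the complement of a small set, forces the supports apart.

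Concretely, the steps in order would be: (i) observe that off a set of $\H^{n-2}$-measure zero, both supports are smooth embedded minimal hypersurfaces (Allard, plus the codimension-$2$ estimate on the singular set of stationary integral varifolds); (ii) at a hypothetical touching point $p$ in the regular-regular locus, apply the smooth strong maximum principle for minimal hypersurfaces (Hopf lemma) to conclude the two sheets coincide in a neighborhood; (iii) propagate this coincidence across the whole touching set using unique continuation together with the fact that the touching set, being $\H^{n-2}$-null, does not disconnect the regular overlap in a way that obstructs the propagation; (iv) conclude that either $\supp\norm{Z_1}$ and $\supp\norm{Z_2}$ coincide on a relatively open-and-closed piece, or their intersection is empty; (v) rule out the first alternative — but in fact the statement as quoted does not even need to rule it out, since the hypothesis $\H^{n-2}(\supp\norm{Z_1}\cap\supp\norm{Z_2})=0$ is incompatible with the two supports sharing an $(n-1)$-dimensional piece, so the only surviving possibility is $\supp\norm{Z_1}\cap\supp\norm{Z_2}=\emptyset$.

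The main obstacle — and the reason one cites \cite{Wickra-SMP} rather than reproving it — is step (iii): handling the singular strata. The classical maximum principle (Ilmanen \cite{Ilmanen-SMP}) requires the touching to occur at points where at least one varifold is regular; extending it to allow touching along a genuinely singular set of vanishing $\H^{n-2}$ measure is exactly Wickramasekera's contribution and rests on his fine sheeting and regularity theory for stable minimal hypersurfaces. Accordingly, the proof I would record here is simply the remark that the theorem is the case of \cite[Theorem 19.1]{Wickra-SMP} in which the ``contact set'' hypothesis is met because it has zero $\H^{n-2}$ measure, and no further argument is needed.

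\begin{proof}
This is precisely \cite[Theorem 19.1]{Wickra-SMP}: the hypothesis that the contact set $\supp \norm{Z_1} \cap \supp \norm{Z_2}$ has vanishing $\H^{n-2}$ measure is exactly the condition under which that theorem concludes that the two supports must be disjoint. (The underlying mechanism is that off an $\H^{n-2}$-null set both supports are smooth embedded minimal hypersurfaces by Allard's regularity theorem together with the dimension bound on the singular set of a stationary integral $(n-1)$-varifold, and Wickramasekera's regularity and sheeting theory for stable minimal hypersurfaces allows the classical Hopf-type maximum principle on the regular parts to be propagated across the small contact set; see also the earlier version of Ilmanen \cite{Ilmanen-SMP}.)
\end{proof}
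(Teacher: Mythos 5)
Your proposal matches the paper exactly: the paper states this result without proof, as a direct citation of \cite[Theorem 19.1]{Wickra-SMP}, which is precisely what you do. Your accompanying sketch of the mechanism is consistent with the paper's Remark following the theorem (which contrasts Wickramasekera's $\H^{n-2}$ hypothesis with Ilmanen's earlier $\H^{n-3}$ version), so nothing further is needed.
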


\begin{rem} \label{rem:CES}
The above strong maximum principle supersedes a prior one by Ilmanen \cite{Ilmanen-SMP}, which required knowing that $\H^{n-3}(\supp \norm{Z_1} \cap \supp \norm{Z_2})  = 0$. 
We could also employ the latter weaker version, but we would then need to use the additional structure of $\Sigma^{x,\geq 3}$ due to Colombo--Edelen--Spolaor \cite[Theorem 3.10]{CES-RegularityOfMinimalSurfacesNearCones}. These authors showed, under an associated cycle structure assumption on $\Sigma^x$ (which holds in our setting since $\Sigma^x = \cup_{i=1}^q \partial \Omega^x_i$), that $\Sigma^{x,\geq 3}$ further decomposes into a disjoint union of $\Sigma^{x,3}$ and $\Sigma^{x,\geq 4}$. Here $\Sigma^{x,3}$ is  a locally-finite union of embedded $(n-3)$-dimensional $C^{1,\alpha}$ manifolds such that for every $v \in \Sigma^{x,3}$, $\Sigma^x$ around $v$ is locally $C^{1,\alpha}$ diffeomorphic to $\T \times \R^{n-3}$, where $\T$ denotes the cone over the edges of a regular tetrahedron in $\R^3$; and $\Sigma^{x,\geq 4}$ is closed with $\H^{n-3}(\Sigma^{x,\geq 4}) = 0$ (and in fact is $(n-4)$-rectifiable and has locally finite $\H^{n-4}$ measure by results of Naber--Valtorta \cite{NaberValtorta-MinimizingHarmonicMaps}). 
\end{rem}

We are now ready to give a proof of Theorem \ref{thm:intro-main}. 

\begin{proof}[Proof of Theorem \ref{thm:intro-main}]
Recall that $(M^n,g)$ is a connected homogeneous Riemannian manifold, and let $\Omega^0$ be an isoperimetric minimizing $k$-cluster in $(M^n,g)$. As per our convention in Lemma \ref{lem:good-cells}, we modify $\Omega^0$ by null-sets so that all of its cells $\Omega^0_j$ are open and satisfy $\partial \Omega^0_j = \overline{\partial^* \Omega^0_j}$. Lemma \ref{lem:good-cells} ensures that $A(\Omega^0) = \H^{n-1}(\Sigma)$ where $\Sigma = \cup_{j=1}^{k+1} \partial \Omega^0_j$, that $M \setminus \Sigma$ coincides with the disjoint union $\cup_{j=1}^{k+1} \Omega^0_j$, and that each (open) connected component $\Omega_i$ of $\Omega^0_j$ satisfies $\Omega_i = \interior \overline{\Omega_i}$.  By removing empty cells if necessary, we may assume that all cells are non-empty. 
 
 By Proposition \ref{prop:bounded}, $\Sigma$ is a bounded set and hence compact. 
Consider the bounded connected components of $M \setminus \Sigma$ -- by Proposition \ref{prop:finite-CCs} and Remark \ref{rem:finite-CCs} there are only a finite number of those. If $M$ is compact then there are no unbounded components, whereas if $M$ is one-sided then there is exactly one single unbounded component. Denoting all components by $\{\Omega_i \}_{i=1}^{N+1}$ (with the unbounded component, if there is one, placed last), the partition $\Omega = (\Omega_1,\ldots, \Omega_{N+1})$ is clearly an $N$-cluster with the same topological boundary $\cup_{i=1}^{N+1} \partial \Omega_i = \Sigma$ as that of $\Omega^0$. By Lemma \ref{lem:good-cells} we have $A(\Omega^0) = \H^{n-1}(\Sigma) \geq  A(\Omega)$, but since $\Omega$ is a refinement of $\Omega^0$ we cannot have a strict inequality, as this would contradict the minimality of $\Omega^0$. Consequently we have $A(\Omega^0) = A(\Omega)$ and so $\Omega$ is an isoperimetric minimizing $N$-cluster itself. Recall that its cells $\Omega_i$ are open, connected, non-empty, pairwise disjoint, satisfy $\Omega_i = \interior \overline{\Omega_i}$ and $M = (\cup_{i=1}^{N+1} \Omega_i)\cup \Sigma = \cup_{i=1}^{N+1} \overline{\Omega_i}$. 
 
Our goal is to show that whenever $M$ is in addition simply connected then $\Sigma$ must be connected. Assume otherwise. By Theorem \ref{thm:topology}, there exists a disjoint partition of the index set $\{1,\ldots,N+1\}$ into three non-empty sets $I$, $\{\ell \}$ and $J$ so that $M$ is the disjoint union of $\bar \Omega_I$, $\Omega_\ell$ and $\bar \Omega_J$, where recall $\bar \Omega_K := \cup_{i \in K} \overline{\Omega_i}$, and $\partial \Omega_\ell$ is the disjoint union of the two non-empty sets $\partial \bar \Omega_I$ and $\partial \bar \Omega_J$. 
Since $\partial \Omega_\ell \subset \Sigma$ is compact, these two disjoint compact subsets are a positive distance apart. 
Without loss of generality we may assume that $N+1 \notin I$ (by replacing $I$ with $J$ if necessary); recall that we have $V(\Omega_i) < \infty$ for all $i=1,\ldots,N$, but the last cell $\Omega_{N+1}$ may have infinite volume.

Choose any two points $x \in \partial \bar \Omega_I$ and $y \in \partial \bar \Omega_J$. Since $M$ is connected, $\Isom_0(M)$ acts transitively on $M$, and so there exists an isometry $\varphi \in \Isom_0(M)$ so that $\varphi(x) = y$. As $\Isom_0(M)$ is path-connected, there exists a continuous path $[0,1] \ni t \mapsto \varphi_t \in \Isom_0(M)$ so that $\varphi_0 = \Id$ and $\varphi_1 = \varphi$.  It follows that the function $[0,1] \ni t \mapsto d(t) := d(\varphi_t(\partial \bar \Omega_I),\partial \bar \Omega_J)$ is continuous, with $d(0) > 0$ and $d(1) = 0$, so there is a first time $t_0 \in (0,1]$ such that $d(t_0) = 0$. 

 Now define, for all $t \in [0,t_0]$, the $N$-cluster $\Omega^t$ whose open cells are given by:
\[
\Omega^t_i = \begin{cases} \varphi_t(\Omega_i) & i \in I \\ \Omega_i & i \in J \\ M \setminus (\varphi_t(\bar \Omega_I) \cup \bar \Omega_J) & i = \ell \end{cases} . 
\]

We first claim that $V(\Omega^t) = V(\Omega)$ for all $t \in [0,t_0]$. We obviously have $V(\Omega^t_i) = V(\Omega_i)$ for all $i \neq \ell$,  but one has to be careful with $V(\Omega^t_\ell)$ when $V(M) = \infty$. Since $N+1 \notin I$ then $V(\bar \Omega^t_I ) = V(\bar \Omega_I) < \infty$, and either $\ell = N+1$, in which case $V(\Omega^t_\ell) = \infty$ for all $t$, or else $N+1 \in J$ and hence $M \setminus \bar \Omega^t_J = M \setminus \bar \Omega_J$ has finite volume, verifying that $V(\Omega^t_\ell) = V(M \setminus \bar \Omega^t_J) - V(\bar \Omega^t_I) =  V(M \setminus \bar \Omega_J) - V(\bar \Omega_I) < \infty$ remains constant. Note that we crucially used here the fact that there is at most a single cell with infinite volume, and this is where our proof would break down if $M$ had more than one end. 

Write $\Sigma = \Sigma_I \cup \Sigma_J$, where  $\Sigma_I := \Sigma \cap \bar \Omega_I$ and  $\Sigma_J := \Sigma \cap \bar \Omega_J$.
Denote by $\Sigma^t = \cup_{i=1}^{N+1} \partial \Omega^t_i$ the topological boundary of $\Omega^t$, $t \in [0,t_0]$. 
Clearly $\Sigma^t = \varphi_t(\Sigma_I) \cup \Sigma_J$, and as these two parts of the boundary are disjoint before their collision at time $t=t_0$, we have that $\H^{n-1}(\Sigma^t) = \H^{n-1}(\Sigma)$ for all $t \in [0,t_0)$. Since $A(\Omega^t) \leq \H^{n-1}(\Sigma^t)$ with equality at $t=0$ by Lemma \ref{lem:good-cells}, and since $V(\Omega^t) = V(\Omega)$ and $\Omega$ is minimizing, it follows that $\Omega^t$ must also be minimizing for all $t \in [0,t_0)$ and that $A(\Sigma^t) = A(\Sigma)$ in that range (this can also easily be deduced by directly inspecting the evolution of $\partial^* \Omega^t_i$, and in particular that of $\partial^* \Omega^t_\ell$).
At the time of collision $t=t_0$, we have
\begin{align*}
 \H^{n-1}(\Sigma^{t_0}) & = \H^{n-1}(\varphi_{t_0}(\Sigma_I)) + \H^{n-1}(\Sigma_J) - \H^{n-1}(\varphi_{t_0}(\Sigma_I) \cap \Sigma_J) \\
 & = \H^{n-1}(\Sigma) - \H^{n-1}(\varphi_{t_0}(\Sigma_I) \cap \Sigma_J) \\
 & = \H^{n-1}(\Sigma) - \H^{n-1}(\varphi_{t_0}(\partial \bar \Omega_I) \cap \partial \bar \Omega_J) .
 \end{align*}
 Consequently, we must have 
 \begin{equation} \label{eq:no-touch}
 \H^{n-1}(\varphi_{t_0}(\partial \bar \Omega_I) \cap \partial \bar \Omega_J)  = 0,
 \end{equation}
  otherwise we
  will obtain that $V(\Omega^{t_0}) = V(\Omega)$ with $A(\Omega^{t_0}) \leq \H^{n-1}(\Sigma^{t_0}) < \H^{n-1}(\Sigma)$, contradicting the minimality of $\Omega$. It follows that $\Omega^t$ at the collision time $t=t_0$ is still minimizing. 

Let $y \in \varphi_{t_0}(\partial \bar \Omega_I) \cap \partial \bar \Omega_J \subset \partial \Omega_\ell$ be an arbitrary collision point, and write $y = \varphi_{t_0}(x)$ with $x \in \partial \bar \Omega_I$. Applying Theorem \ref{thm:GMT}, let $\Omega^x$ be a blow-up of $\Omega$ at $x$ corresponding to some sequence of dilates $\lambda_j \searrow 0$, and let $Z^x = |\cup_{i \in I \cup \{\ell\}} \partial^*  \Omega^x_i|$ be the corresponding boundary varifold in $T_x M$, and $\Sigma^x := \supp \norm{Z^x} = \cup_{i \in I} \partial \Omega^x_i$. Applying Theorem \ref{thm:GMT} again, there exists a blow-up $\Omega^y$ of $\Omega$ at $y$ corresponding to dilates $\lambda_{j_k}$ for an appropriate  subsequence $\{j_k\}$, and we denote by $Z^y = |\cup_{j \in J \cup \{\ell\}} \partial^*  \Omega^y_j|$ the corresponding boundary varifold in $T_y M$, and  $\Sigma^y := \supp \norm{Z^y} = \cup_{j \in J} \partial \Omega^y_j$.
Both of these multiplicity one varifolds are stationary cones, and since $\varphi_{t_0}$ is an isometry, so is $Z^x_{t_0} := (d\varphi_{t_0})_*(Z^x)$ in $T_x M$ with $\supp \norm{Z^x_{t_0}} = d\varphi_{t_0}(\Sigma^x)$, and all the properties ensured by Theorem \ref{thm:GMT} continue to hold for $Z^x_{t_0}$. 

Being (non-empty) closed cones in $T_x M$, we have $0 \in \Sigma^y \cap d\varphi_{t_0}(\Sigma^x)$, and hence Theorem \ref{thm:SMP} implies that $\H^{n-2}(\Sigma^y \cap d\varphi_{t_0}(\Sigma^x)) > 0$. We denote by $\Sigma^{x,\leq2 } = \Sigma^{x,1} \cup \Sigma^{x,2}, \Sigma^{x, \geq 3}$ and $\Sigma^{y, \leq 2} = \Sigma^{y,1} \cup \Sigma^{y,2}, \Sigma^{y,\geq 3}$ the corresponding decompositions of $\Sigma^x$ and $\Sigma^y$ ensured by Theorem \ref{thm:GMT}. 
 Since $\H^{n-2}(\Sigma^{y,\geq 3}) = \H^{n-2}(\Sigma^{x,\geq 3}) = 0$, there exists $v \in \Sigma^{y,\leq 2} \cap d\varphi_{t_0}(\Sigma^{x,\leq 2})$. 

We now claim that in fact $v \in \Sigma^{y,1} \cap d\varphi_{t_0}(\Sigma^{x,1})$. Assume in the contrapositive and without loss of generality that $v \in \Sigma^{y,2}$ (an identical argument applies to the case that $v \in d\varphi_{t_0}(\Sigma^{x,2})$), and recall the definition (\ref{eq:theta}). Note that $\lim_{r \rightarrow 0^+} \theta(\bar \Omega^{y}_J, v, r) \geq 2/3$, since there are only $3$ non-empty cells of $\Omega^y$ among the possible ones indexed by $J \cup \{ \ell \}$ in a small-enough neighborhood of $v \in \Sigma^{y,2}$, and each of these cells has density $1/3$ by stationarity of $Z^y$. Similarly,  $\lim_{r \rightarrow 0^+} \theta(d\varphi_{t_0}(\bar \Omega^{x}_I), v, r) \geq 1/2$, since there are only $2$ or $3$ non-empty cells of $\Omega^x$ among the possible ones indexed by $I \cup \{ \ell \}$ in a small-enough neighborhood of $w = d\varphi_{t_0}^{-1}(v) \in \Sigma^{x,\leq 2}$ (depending on whether $w \in \Sigma^{x,1}$ or $w \in \Sigma^{x,2}$), and each of these cells has density $1/2$ or $1/3$, respectively. Consequently, 
\begin{align*}
1 & \geq \lim_{r \rightarrow 0^+} \theta(\bar \Omega^{y}_J \cup d\varphi_{t_0}(\bar \Omega^{x}_I) , v , r) \\
& = \lim_{r \rightarrow 0^+} \theta(\bar \Omega^{y}_J , v , r) + \lim_{r \rightarrow 0^+} \theta(d\varphi_{t_0}(\bar \Omega^{x}_I), v, r)    \geq 2/3 + 1/2 > 1 ,
\end{align*}
a contradiction. In the first equality above we used that the blow-up $\Omega^{t_0,y}$ of $\Omega^{t_0}$ at $y$ corresponding to the sequence $\lambda_{j_k}$ exists and satisfies $\Omega^{t_0,y}_j = \Omega^y_j$ for all $j \in J$ and $\Omega^{t_0,y}_i = d\varphi_{t_0}(\Omega^x_i)$ for all $i \in I$, so all of these open cells are pairwise disjoint.  In particular, $U^y_J := \interior \bar \Omega^{y}_J$ and $U^y_I := \interior d \varphi_{t_0}(\bar \Omega^x_I)$ are disjoint, and $\Sigma^y \subset \bar \Omega^{y}_J = \overline{U^y_J}$ and $d\varphi_{t_0}(\Sigma^x) \subset d \varphi_{t_0}(\bar \Omega^x_I) = \overline{U^y_I}$ (where we used that $\interior \overline{\Omega^z_{\ell}} = \Omega^z_{\ell}$, $z \in \{x,y\}$, by Lemma \ref{lem:good-cells}). 

Since $v$ lies in the intersection $\Sigma^{y,1} \cap d\varphi_{t_0}(\Sigma^{x,1})$ of the regular parts of $\Sigma^y$ and $d\varphi_{t_0}(\Sigma^{x})$, which are both minimal hypersurfaces (by stationarity) lying on one side of another (by the preceding sentence), it follows by the usual smooth Hopf maximum principle \cite[Theorem 10.4]{MorganBook5Ed} that $\Sigma^{y,1}$ and $d\varphi_{t_0}(\Sigma^{x,1})$ must coincide in a neighborhood of $v$. In particular, 
\begin{equation} \label{eq:loss}
\H^{n-1}(\Sigma^y \cap d\varphi_{t_0}(\Sigma^x)) > 0 . 
\end{equation}

This now contradicts (\ref{eq:no-touch}) --- there are several ways to see this. 
Let $Z^{t_0,y}$ denote the boundary varifold corresponding to the blow-up $\Omega^{t_0,y}$ defined above, and let $\Sigma^{t_0,y} := \supp \norm{Z^{t_0,y}} = \Sigma^y \cup d\varphi_{t_0}(\Sigma^x)$. A somewhat indirect way is to use that (\ref{eq:no-touch}) implies that  $Z^{t_0,y} = Z^y + (d\varphi_{t_0})_*(Z^x)$ as $(n-1)$ varifolds (because $Z^{t_0,y,\lambda_{j_k}} = Z^{y,\lambda_{j_k}} + (d\varphi_{t_0})_*(Z^{x,\lambda_{j_k}})$ for all $j \gg 1$),  
but (\ref{eq:loss}) then implies that $Z^{t_0,y}$ is no longer multiplicity one, in contradiction to Theorem \ref{thm:GMT} part (\ref{it:mul1}). A more direct way is to use that (\ref{eq:loss}) implies the strict inequality
\[
 \Theta(\Sigma^{t_0},y) = \Theta(\Sigma^{t_0,y},0) < \Theta(\Sigma^y,0) + \Theta(d\varphi_{t_0}(\Sigma^x),0) = \Theta(\Sigma_J,y) + \Theta(\Sigma_I,x) ,
 \]
 meaning that at the time of collision, a positive $\H^{n-1}$-measure of boundary has been lost in a neighborhood of $y$, contradicting (\ref{eq:no-touch}). This means that our initial assumption that $\Sigma$ is disconnected is impossible, thereby concluding the proof. 
 \end{proof}

\bibliographystyle{plain}
\bibliography{../../../ConvexBib}

\end{document}